\documentclass[11pt]{amsart}

\usepackage[letterpaper,top=3.5cm,bottom=3.3cm,left=3cm,right=3cm,marginparwidth=1.75cm]{geometry}
\usepackage{amsmath}
\usepackage{amsthm}
\usepackage{amssymb}
\usepackage{mathrsfs}
\usepackage[english]{babel}
\usepackage{amsfonts}
\usepackage{graphicx}
\usepackage{float}
\usepackage[colorlinks=true, allcolors=blue]{hyperref}
\usepackage{mathtools}
\usepackage{verbatim}
\usepackage{enumitem} 

\newcommand{\R}{\mathbb{R}}
\newcommand{\ONF}{\Omega_{NF}}
\newcommand{\N}{{\mathbb{N}}}
\newcommand{\M}{\tilde{M}}
\newcommand{\A}{\mathcal{R}_w}

\title{Approximation by invariant Dirac measures on non-positively curved manifolds}
\author{Paul Mella}
\date{}

\theoremstyle{plain}
\newtheorem{theoL}{Theorem}

\newtheorem{definL}[theoL]{Definition}
\newtheorem{propL}[theoL]{Proposition}

\theoremstyle{plain}
\newtheorem{theo}{Theorem}

\newtheorem{fact}{Fact}[theo]

\newtheorem{prop}[theo]{Proposition}
\newtheorem*{propB}{Proposition B}
\newtheorem*{propC}{Proposition C}
\newtheorem*{propD}{Proposition D}

\newtheorem{cor}[theo]{Corollary}
\newtheorem{lem}[theo]{Lemma}

\newtheorem*{definA}{Definition A}

\theoremstyle{remark} 
\newtheorem*{rem}{Remark}

\setlength{\parindent}{15pt}

\begin{document}

\begin{abstract}
We study the topology of the space of probability measures invariant under the geodesic flow, defined on the unit-tangent bundle of a compact Riemannian manifold with non-positive curvature. Building on a previous work by Coudène and Schapira we introduce the set of \textit{weakly regular} vectors, denoted by $\A$: a vector in the unit tangent bundle of a Riemannian manifold is weakly regular if for all $\epsilon>0$, its $\epsilon$-stable set and $\epsilon$-unstable set both intersect the set $\ONF $ of non-wandering vectors whose orbit does not bound a flat strip. We show that every ergodic probability measure supported on $\A$ can be approximated by Dirac measures supported on periodic orbits in $\ONF$. As a consequence, ergodicity is a generic property in the space of invariant measures supported on $ \A $. We illustrate our findings using a famous example of rank-one manifold attributed to Heintze and Gromov, demonstrating that in this setting the inclusion $\ONF  \subset \A$ is proper and $\A$ is the maximal subset of the unit-tangent bundle satisfying the density property stated above. Finally, as a consequence of our main result, we describe the topology of the closure of the set of ergodic probability measures and provide a complete decomposition of the space of finite invariant measures on the unit-tangent bundle of the Heintze-Gromov manifold.
\end{abstract}

\maketitle

\section{Introduction}

A wide variety of continuous-time dynamical systems can be defined as the restriction of the geodesic flow to an invariant subset of the unit-tangent bundle of a Riemannian manifold. This construction is naturally motivated by the deep connection between the dynamics of the geodesic flow and the geometry of the underlying manifold. For example, it is well known that the geodesic flow on the unit-tangent bundle of a complete and negatively curved Riemannian manifold is Anosov \cite{A67}. If furthermore the manifold is assumed to be compact and connected, then the geodesic flow satisfies the Closing Lemma \cite{A67}, it is topologically transitive \cite{E73} and the Liouville measure is ergodic \cite{A67, AS67, H39}. The topology of the space of invariant probability measures for the geodesic flow has been extensively studied in this setting. Sigmund notably proved that ergodicity is a generic property in the space of invariant probability measures \cite{S72}, which makes the latter a Poulsen simplex \cite{LOS78}. All these dynamical properties are closely related to the negative curvature assumption: in non-positive curvature, they fail in general.\\

While in negative curvature every geodesic is expanding, in the sense that nearby initially parallel trajectories exhibit exponential divergence in forward time, certain geodesics on a non-positively curved manifold may not have this property. This leads to the definition of the \textit{rank} of a geodesic as the dimension of the vector space of parallel Jacobi fields along that geodesic \cite{B95, BGS85, E96}. This definition can be extended to the unit-tangent bundle, the rank of a vector being defined as the rank of its orbit by the geodesic flow. Interestingly, the minimal rank of all geodesics on a manifold has important consequences on the global dynamical properties of the geodesic flow. For this reason, minimal rank geodesics are often called \textit{regular} and a manifold will be said to have rank $k$, for some $k$ between $1$ and its dimension, if its regular geodesics have rank $k$.\\

Non-positively curved manifolds have been thoroughly studied in the past forty years. One of the key findings in this area of study is the Mostow Rigidity Theorem, for which a detailed proof can be found in \cite{BGS85}. It implies that a compact non-positively curved Riemannian manifold of finite volume and rank at least two is locally symmetric. On the other hand, this theorem does not encompass the case of rank-one manifolds. It is a well-known fact that many local properties of negatively curved manifolds remain true on rank-one manifolds, near regular geodesics. However, when it comes to global properties such as transitivity, Closing Lemma and the genericity of ergodic measures, there are examples of rank-one compact manifolds where they fail \cite{CS11}.\\

While the rank-one assumption is not sufficient for all the global properties of the geodesic flow in negative curvature to hold, it is possible to generalize some of those properties in restriction to an invariant subset of the unit-tangent bundle. The notion of flat strip has proved itself crucial  to that extent.

Let $M$ be a compact rank-one Riemannian manifold of non-positive curvature. Throughout this work, the unit-tangent bundle of $M$ will be denoted by $T^1M$ and the geodesic flow by $(g^t)_{t\in\mathbb{R}}$. A \textit{flat strip} is a totally geodesic isometric embedding of $\mathbb{R}\times I$ in $M$, with $I\subset\R$ a closed interval of positive width. Let us denote by $\ONF $ the subset of $T^1M$ containing all non-wandering vectors whose orbit does not bound a flat strip, borrowing the notation from Coudène and Schapira. Since $M$ is compact, every vector is non-wandering, thus $\ONF $ contains the set of regular vectors, that is to say the vectors of rank one.

Coudène and Schapira proved in \cite{CS14} that the geodesic flow is transitive and satisfies the Closing Lemma in $\ONF$. As a consequence of those purely dynamical properties, the \textit{Dirac measures} on $\ONF$, that is to say the invariant probability measures supported by the periodic orbits in $\ONF$, are dense in the space of all invariant probability measures on $\ONF$. This bridge between the dynamical properties of a flow and the density of the set of Dirac measures has been generalized in 2016 by Gelfert and Kwietniak \cite{GK16}. They show that Dirac measures supported on a subset of a dynamical system are dense in the space of invariant probability measures as soon as this subset satisfies the \textit{closeablity} and \textit{linkability} properties. Their work is focused on discrete-time dynamical systems but can be applied also to continuous-time dynamical systems.

The present article aims at generalizing those results to a larger subset than $\ONF$. The elements of this set will be called \textit{weakly regular vectors} to highlight the fact that they carry most of the dynamical properties of regular vectors, especially when it comes to the topological properties of the invariant measures they support.\\

The space of invariant probability measures on a subset $S\subset T^1M$ invariant under the geodesic flow will be denoted by $\mathcal{M}^1(S)$ and identified with the space of invariant probability measures on $T^1M$ that give full mass to $S$. The space of ergodic probability measures on $S$ will be denoted by $\mathcal{M}_e^1(S)\subset \mathcal{M}^1(S)$ and the space of Dirac measures supported on periodic vectors in $S$ will be denoted $\mathcal{M}_{per}^1(S)\subset \mathcal{M}_e^1(S)$. To simplify some formulas, the spaces $\mathcal{M}^1(T^1M)$ and $\mathcal{M}_e^1(T^1M)$ will be simply denoted by $\mathcal{M}^1$ and $\mathcal{M}_e^1$. In \cite{CS14}, the density of $\mathcal{M}_{per}^1(\ONF )$ notably allows Coudène and Schapira to prove that ergodicity is a \textit{generic} property in $\mathcal{M}^1(\ONF )$, that is to say, $\mathcal{M}_e^1(\ONF )$ is a dense $G_\delta$ subset of $\mathcal{M}^1(\ONF )$.

In order to define the set of weakly regular vectors, we need to recall some classical definitions. For any vector $v\in T^1M$, the strong stable and unstable sets of $v$ are:
$$W^{ss}(v) = \left\{u\in T^1 M ~;~ d\left(g^tu,g^tv\right) \underset{t\rightarrow+\infty}{\longrightarrow}0\right\} , \quad W^{su}(v) = \left\{u\in T^1 M ~;~ d\left(g^tu,g^tv\right) \underset{t\rightarrow-\infty}{\longrightarrow}0\right\}$$ 

For technical reasons, we introduce a slightly larger set called $\epsilon$-stable (resp. $\epsilon$-unstable) set:
$$W^{\epsilon,s}(v) = \left\{u\in T^1 M ~;~ \underset{t\rightarrow+\infty}{\mathrm{limsup}}~d\left(g^tu,g^tv\right) <\epsilon\right\} , \quad W^{\epsilon,u}(v) = \left\{u\in T^1 M ~;~ \underset{t\rightarrow-\infty}{\mathrm{limsup}}~d\left(g^tu,g^tv\right) <\epsilon\right\}$$ 

It is easy to verify that it contains $W^{ss}(v)$ (resp. $W^{su}(v)$) and that when $v$ is expansive, which is the case, for instance, under the action of an Anosov flow, equality holds. 

For any subset $S\subset  T^1M$, we will denote by $W^{\epsilon,s}(S)$ (resp. $W^{\epsilon,u}(S)$) the set of vectors whose $\epsilon$-stable (resp. $\epsilon$-unstable) set intersects $S$. This is equivalent to:
$$W^{\epsilon,s}(S) = \underset{v\in S}\bigcup W^{\epsilon,s}(v)\qquad\mathrm{and}\qquad W^{\epsilon,u}(S) = \underset{v\in S}\bigcup W^{\epsilon,u}(v)$$

\begin{definL}
Let us assume that the geodesic flow on $T^1M$ admits at least three distinct periodic orbits and that $\Omega_{NF}$ is open. A vector $v \in T^1M$ will be said to be weakly regular if one of the following equivalent properties holds:
\begin{itemize}
    \item $v$ belongs to both $W^{\epsilon, s}(\ONF )$ and $W^{\epsilon, u}(\ONF )$ for all $\epsilon > 0$.
    \item Both $W^{\epsilon, s}(v)$ and $W^{\epsilon, u}(v)$ intersect $\ONF $ for all $\epsilon>0$.
    \item Both $\underset{s\in\R}{\bigcup} g^s\big(W^{\epsilon, s}(v)\big)$ and $\underset{s\in\R}{\bigcup} g^s\big(W^{\epsilon, u}(v)\big)$ are dense in $\ONF $ for all $\epsilon>0$.
\end{itemize}
The set of weakly regular vectors will be denoted by $\A$.\\
\end{definL}

In Section \ref{sec:cdirac}, we prove a density result in $\mathcal{M}_e^1(\A)$:
\begin{propL}\label{gdirac}Let $M$ be a compact, connected, non-positively curved manifold such that $\Omega_{NF}$ is open in $T^1M$ and the geodesic flow has at least three periodic orbits that do not bound a flat strip. Then any ergodic probability measure on $\A$ is in the closure of the set of Dirac measures supported by periodic orbits that do not bound a flat strip. That is to say:
$$\mathcal{M}_e^1(\A)\subset\overline{\mathcal{M}_{per}^1(\ONF )}$$
\end{propL}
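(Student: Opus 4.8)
The plan is to reduce the claim to the classical density of periodic Dirac measures in $\mathcal{M}^1(\ONF)$, which follows from the Coudène–Schapira transitivity and Closing Lemma in $\ONF$ (applied, as in \cite{GK16}, via closeability and linkability). So let $\mu \in \mathcal{M}_e^1(\A)$ be an ergodic measure giving full mass to $\A$. The goal is to produce, for any finite collection of continuous test functions $f_1,\dots,f_k$ on $T^1M$ and any $\eta>0$, a periodic vector $w \in \ONF$ whose orbital average of each $f_i$ is within $\eta$ of $\int f_i\, d\mu$. First I would invoke the ergodic theorem and Birkhoff's pointwise convergence to pick a $\mu$-typical point $v$ whose forward Birkhoff averages along the geodesic flow converge to $\int f_i \, d\mu$ for every $i$; since $\mu(\A)=1$, this $v$ can be taken in $\A$.

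Next, the heart of the argument is a shadowing/closing step performed not at $v$ itself but at a genuinely regular point nearby. Because $v\in\A$, for every $\epsilon>0$ there is a vector $v^s \in \ONF$ with $\limsup_{t\to+\infty} d(g^t v, g^t v^s) < \epsilon$ and a vector $v^u \in \ONF$ with $\limsup_{t\to-\infty} d(g^t v, g^t v^u) < \epsilon$. I would use the local product structure of the geodesic flow in non-positive curvature near $\ONF$ (which is open by hypothesis) to combine $v^u$ and $v^s$ into a single vector $v' \in \ONF$ that stays $O(\epsilon)$-close to $v$ both in a long window of forward time and in a long window of backward time — concretely, $v'$ shadows the long orbit segment of $v$ that the Closing Lemma will act on. The key quantitative point is: by choosing $\epsilon$ small and the time windows $[0,T]$ and $[-T,0]$ long (using the Birkhoff convergence at $v$ in forward time and controlling the backward contribution of $v^u$, which is $\ONF$-approximable and hence whose own Birkhoff averages can be handled by transitivity), the orbital average of $f_i$ over $v'$ on a suitably long segment is within $\eta/2$ of $\int f_i\, d\mu$, using uniform continuity of the $f_i$ on the compact $T^1M$.

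Having obtained a vector $v' \in \ONF$ together with a long orbit segment on which its averages of all $f_i$ approximate $\int f_i \, d\mu$, I would apply the Closing Lemma of Coudène–Schapira in $\ONF$: the segment of $g^\bullet v'$ is $\delta$-shadowed by a periodic orbit $w$ entirely contained in $\ONF$, with $\delta$ as small as desired and with period comparable to the length of the segment. Uniform continuity of the $f_i$ then transfers the average estimate from the $v'$-segment to the period of $w$, giving $\left|\frac{1}{T_w}\int_0^{T_w} f_i(g^t w)\,dt - \int f_i\, d\mu\right| < \eta$ for all $i$, i.e. the periodic Dirac measure on the orbit of $w$ lies within the chosen weak-$*$ neighborhood of $\mu$. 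Since $f_1,\dots,f_k$ and $\eta$ were arbitrary, this shows $\mu \in \overline{\mathcal{M}_{per}^1(\ONF)}$.

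The main obstacle I anticipate is the second step: a typical point $v\in\A$ need not itself be shadowed by anything in $\ONF$ in both time directions simultaneously, and one must genuinely exploit the definition of $\A$ (the two one-sided conditions, $v\in W^{\epsilon,s}(\ONF)$ and $v\in W^{\epsilon,u}(\ONF)$) together with the openness of $\ONF$ to build the auxiliary regular orbit $v'$ whose finite-window averages still see $\mu$. Making the time windows and the error $\epsilon$ interact correctly — so that the forward window is long enough for Birkhoff at $v$, the backward excursion contributes negligibly, and the whole assembled segment lies in the open set $\ONF$ where the Closing Lemma applies — is the delicate bookkeeping; once that regular shadowing vector is in hand, the remainder is a standard application of the Closing Lemma plus uniform continuity, exactly as in the $\ONF$ case of \cite{CS14, GK16}.
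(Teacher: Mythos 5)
Your overall strategy matches the paper's: pick a $\mu$-generic point $v\in\A$, build a vector $v'\in\ONF$ whose long orbit segment shadows the orbit of $v$, apply the Coudène--Schapira Closing Lemma in $\ONF$, and transfer Birkhoff averages by Lipschitz/uniform continuity. Steps 1, 4 and 5 are fine; the gap is in step 2, which is precisely where the paper invests its main new work (Fact~\ref{lem:gdirac}), and your proposal does not supply the mechanism that makes it go through.

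Concretely, the two shadowing vectors $v^s\in W^{\epsilon,s}(v)\cap\ONF$ and $v^u\in W^{\epsilon,u}(v)\cap\ONF$ only approach the orbit of $v$ as $t\to+\infty$ and $t\to-\infty$ respectively; for moderate $|t|$ they may be far from $v$ and far from each other in $T^1M$. The ``local product structure near $\ONF$'' you invoke acts on two vectors that are already close to one another and to a common point of $\ONF$; but the point near which you would want to glue $v^s$ and $v^u$ is $v$ itself, which may bound a flat strip (this is exactly what makes $\A$ larger than $\ONF$), so no product structure is available there, and nothing in your outline forces $v^s$ and $v^u$ to ever come near one another. The paper's resolution, which you omit, is to choose $v$ not only generic but also \emph{positively and negatively recurrent} (Poincaré), and to use the recurrence times to pull a forward translate of the $W^{\epsilon,s}$-partner and a backward translate of the $W^{\epsilon,u}$-partner back close to $v$; lifting to the universal cover, these two translated lifts $\tilde u_n$ and $\tilde u_{m_n}$ then lie close to each other \emph{and} belong to $\ONF$ (by invariance), so one can apply \cite[Lemma 3.1]{B95} at the non-flat point $\tilde u_n$ to produce a geodesic $\tilde u'_n\in\ONF$ with endpoints $\tilde u_n(-\infty)$ and $\tilde u_{m_n}(\infty)$. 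That $\tilde u'_n$, projected down and time-shifted, is the closing-lemma candidate. Without the recurrence of $v$ and the resulting ability to work at a non-flat point near $v$, the ``combining'' step is not just delicate bookkeeping but is genuinely missing, so the argument as written does not close. (Transitivity in $\ONF$, which you do mention, only serves to place the two one-sided partners near the closing-lemma base point $w$; it does not by itself merge them into one orbit.)
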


Following the works of Coudène and Schapira \cite{CS14}, a first consequence of Proposition \ref{gdirac} is that every invariant probability measure on $\A$ can be approximated by regular Dirac measures and that ergodicity is a generic property within the set of invariant probability measures on $\A$.

\begin{propL}\label{genericity}Let $M$ be a compact, connected and non-positively curved manifold and assume that $\Omega_{NF}$ is open in $T^1M$ and the geodesic flow has at least three periodic orbits that do not bound a flat strip. Then $\mathcal{M}^1_e(\A)$ is a generic subset of $\mathcal{M}^1(\A)$ and the following inclusion holds:
$$\mathcal{M}^1(\A) \subset \overline{\mathcal{M}_{per}^1(\ONF )}$$ 
\end{propL}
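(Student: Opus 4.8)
The plan is to obtain both assertions as formal consequences of Proposition \ref{gdirac}, using in addition only the density theorem of Coudène and Schapira for $\ONF$ together with classical facts about Choquet simplices and the ergodic decomposition. I would first record the elementary inclusion $\ONF\subset\A$: for every $v\in T^1M$ one has $d(g^tv,g^tv)=0$ for all $t$, so $v\in W^{\epsilon,s}(v)\cap W^{\epsilon,u}(v)$ for every $\epsilon>0$; hence if $v\in\ONF$ then both $W^{\epsilon,s}(v)$ and $W^{\epsilon,u}(v)$ meet $\ONF$, i.e. $v\in\A$. In particular $\mathcal{M}_{per}^1(\ONF)\subset\mathcal{M}_e^1(\A)\subset\mathcal{M}^1(\A)$. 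I would then observe that $K:=\overline{\mathcal{M}_{per}^1(\ONF)}$ is compact and convex: by \cite{CS14}, $\mathcal{M}_{per}^1(\ONF)$ is dense in $\mathcal{M}^1(\ONF)$, so $\mathcal{M}_{per}^1(\ONF)\subset\mathcal{M}^1(\ONF)\subset K$ and thus $K=\overline{\mathcal{M}^1(\ONF)}$ is the closure of a convex set, hence convex; and $K$ is compact, being closed in the compact metrizable space $\mathcal{M}^1$.

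Next I would establish the inclusion $\mathcal{M}^1(\A)\subset K$. Let $\mu\in\mathcal{M}^1(\A)$. By the ergodic decomposition theorem there is a Borel probability measure $\mathbb{P}$ on $\mathcal{M}_e^1$ whose barycenter is $\mu$; since $1=\mu(\A)=\int\nu(\A)\,d\mathbb{P}(\nu)$ and $\nu(\A)\le1$, we get $\nu(\A)=1$ for $\mathbb{P}$-almost every $\nu$, so $\mathbb{P}$ is carried by $\mathcal{M}_e^1(\A)$, which is contained in $K$ by Proposition \ref{gdirac}. Since $K$ is closed and convex and the barycenter of a Borel probability measure on a compact convex metrizable set lies in the closed convex hull of its support, it follows that $\mu\in K$. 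This is exactly the second assertion $\mathcal{M}^1(\A)\subset\overline{\mathcal{M}_{per}^1(\ONF)}$, and it also yields the density of $\mathcal{M}_e^1(\A)$ in $\mathcal{M}^1(\A)$, since
$$\mathcal{M}_{per}^1(\ONF)\ \subset\ \mathcal{M}_e^1(\A)\ \subset\ \mathcal{M}^1(\A)\ \subset\ K=\overline{\mathcal{M}_{per}^1(\ONF)} .$$

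It remains to show that $\mathcal{M}_e^1(\A)$ is a $G_\delta$ subset of $\mathcal{M}^1(\A)$, which together with the density above gives the claimed genericity. For this I would invoke the classical fact that the extreme points of a metrizable compact convex subset of a locally convex space form a $G_\delta$: its complement equals $\bigcup_{n\ge1}\big\{\tfrac12(\nu_1+\nu_2):\nu_1,\nu_2\in\mathcal{M}^1,\ d(\nu_1,\nu_2)\ge\tfrac1n\big\}$, each term of which is the continuous image of a compact set and hence closed. Since the extreme points of $\mathcal{M}^1$ are precisely the ergodic measures, $\mathcal{M}_e^1$ is a $G_\delta$ in $\mathcal{M}^1$, and therefore $\mathcal{M}_e^1(\A)=\mathcal{M}_e^1\cap\mathcal{M}^1(\A)$ is a $G_\delta$ in $\mathcal{M}^1(\A)$; combined with density, $\mathcal{M}_e^1(\A)$ is a dense $G_\delta$ subset of $\mathcal{M}^1(\A)$, i.e. ergodicity is a generic property in $\mathcal{M}^1(\A)$.

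The whole argument is a bookkeeping wrap-up of Proposition \ref{gdirac} in the spirit of Sigmund \cite{S72} and of Coudène and Schapira \cite{CS14}, so I do not expect a serious obstacle. The one step that calls for care is the passage from ergodic measures to all of $\mathcal{M}^1(\A)$: it rests on the convexity of $\overline{\mathcal{M}_{per}^1(\ONF)}$, which is exactly where the Coudène--Schapira density theorem for $\ONF$ enters, and on the measurability of $\A$, needed so that the ergodic components of a measure charging $\A$ again charge $\A$ — the latter being routine (and implicit already in the definition of $\mathcal{M}^1(\A)$).
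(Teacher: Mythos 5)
Your proposal is correct and follows essentially the same route as the paper: Proposition \ref{gdirac} to place $\mathcal{M}_e^1(\A)$ inside $\overline{\mathcal{M}_{per}^1(\ONF)}$, the ergodic decomposition to pass to all of $\mathcal{M}^1(\A)$, and the Coud\`ene--Schapira density of $\mathcal{M}_{per}^1(\ONF)$ in $\mathcal{M}^1(\ONF)$ to supply the convexity needed for the barycenter step. The only difference is presentational: you spell out why $\overline{\mathcal{M}_{per}^1(\ONF)}$ is convex (by identifying it with $\overline{\mathcal{M}^1(\ONF)}$) and give the extreme-point argument for the $G_\delta$ property, both of which the paper invokes as standard facts.
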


Those results are applied in Section \ref{sec:ex} to a well-known example of rank-one manifold usually attributed to Heintze and studied by numerous authors, including  Gromov \cite{BGS85, BCFT18, E80, G78, K98, M25}. The Heintze-Gromov manifold has rank one and its set of regular vectors coincides with $\ONF $. It is particularly interesting in our case because the inclusion $\ONF \subset\A$ is proper. Moreover, we will prove that in this setting Proposition \ref{gdirac} is an equivalence:

\begin{propL}\label{prop:D}Let $M$ be the Heintze-Gromov manifold defined in Section \ref{sec:ex}. An ergodic probability measure $\mu$ on $T^1 M$ can be approximated by Dirac measures supported on orbits that do not bound a flat strip if and only if $\mu(\A)=1$.
\end{propL}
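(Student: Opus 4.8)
The statement to prove is Proposition \ref{prop:D}: for the Heintze--Gromov manifold, an ergodic measure $\mu$ is approximable by Dirac measures on orbits avoiding flat strips if and only if $\mu(\A)=1$. One direction is immediate from Proposition \ref{gdirac}: if $\mu(\A)=1$ then, since $\mu$ is ergodic, $\mu\in\mathcal{M}_e^1(\A)\subset\overline{\mathcal{M}_{per}^1(\ONF)}$. So the entire content is the converse: if $\mu$ is ergodic and $\mu\in\overline{\mathcal{M}_{per}^1(\ONF)}$, then $\mu(\A)=1$. I would prove the contrapositive: if $\mu(\A)<1$, then (by ergodicity) $\mu(\A)=0$, and I must show $\mu$ cannot be a weak-$*$ limit of Dirac measures supported on periodic orbits in $\ONF$.

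\emph{Key steps for the converse.} First, I would invoke the explicit description of the Heintze--Gromov manifold from Section \ref{sec:ex}: its set of regular vectors equals $\ONF$, and the complement $T^1M\setminus\ONF$ consists of vectors bounding a flat strip, which here is organized as a flat cylinder (or finitely many flat cylinders) — so $T^1M\setminus\A$ should be an explicit closed, flow-invariant set built from this flat part together with the vectors whose entire $\epsilon$-stable or $\epsilon$-unstable sets miss $\ONF$. The second step is to show that $T^1M\setminus\A$ is \emph{closed} and that it does \emph{not} meet the closure of the periodic orbits in $\ONF$ in a way that allows approximation: concretely, I would produce an open neighborhood $U$ of $\mathrm{supp}(\mu)$ (using $\mu(\A)=0$, so $\mathrm{supp}(\mu)\subset T^1M\setminus\A$) and a continuous function $\varphi$ with $\int\varphi\,d\mu>0$ but $\int\varphi\,d\nu=0$ for every $\nu\in\mathcal{M}_{per}^1(\ONF)$ — if such a function exists, then $\mu\notin\overline{\mathcal{M}_{per}^1(\ONF)}$ and we are done. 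The natural candidate is (a flow-averaged, smoothed version of) the indicator of a neighborhood of the flat cylinder: a periodic orbit in $\ONF$ is regular, hence disjoint from the flat cylinder, and by compactness stays a definite distance away, so its Dirac measure gives mass zero to a small enough neighborhood; meanwhile $\mu$, being supported off $\A\supset\ONF$, is concentrated on (a neighborhood of) the flat part. The one subtlety is that $\mu$ could a priori be supported on the part of $T^1M\setminus\A$ that is \emph{not} the flat cylinder itself but only the "stable/unstable" part $W^{\epsilon,s}(v)$-type vectors; here I would use that in the Heintze--Gromov example these extra vectors are precisely the ones that are forward- or backward-asymptotic \emph{only} to the flat cylinder, so any ergodic measure they carry must, by Poincaré recurrence and the structure of the non-wandering set, actually be supported on the flat cylinder itself — reducing to the previous case.

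\emph{Where the work concentrates.} The main obstacle is the second step: verifying that for the specific Heintze--Gromov geometry, $T^1M\setminus\A$ reduces, from the point of view of ergodic invariant measures, to the flat cylinder (equivalently, that there is no ergodic invariant measure giving positive mass to $(T^1M\setminus\A)\setminus(\text{flat part})$). This requires understanding the asymptotic behavior of the non-regular, non-flat vectors in this example — i.e.\ a careful analysis of which vectors have $W^{\epsilon,s}$ or $W^{\epsilon,u}$ disjoint from $\ONF$ for some $\epsilon$, using the explicit product/warped structure of the metric near the flat cylinder recalled in Section \ref{sec:ex}. Once that geometric reduction is in hand, the separation argument (producing $\varphi$) is routine: the flat cylinder is a compact invariant set disjoint from every regular periodic orbit, at positive distance from each by compactness of the orbit, so Urysohn plus flow-averaging yields the desired separating observable, and weak-$*$ convergence is contradicted.
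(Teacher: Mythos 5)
Your forward direction coincides with the paper's. For the converse, however, your sketch has two significant problems and does not match the paper's actual argument.

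First, your proposed reduction ``any ergodic measure $\mu$ with $\mu(\A)=0$ must actually be supported on the flat cylinder itself'' is false for the Heintze--Gromov manifold. By Lemma \ref{lem:cross} and Lemma \ref{lem:Sc}, $\A^c = (\Sigma_{\mathrm{left}}\times T^1A_{\mathrm{right}})\cup(T^1A_{\mathrm{left}}\times\Sigma_{\mathrm{right}})$ consists entirely of periodic vectors, so ergodic measures on $\A^c$ are Dirac measures $\delta_v$ with $v=(x,\omega)$, $x\in\Sigma_{\mathrm{left}}$ arbitrary (not on the central torus $\mathbb T$) and $\omega\in T^1_{\pm}A_{\mathrm{right}}$. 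The supporting orbit is the circle $\{x\}\times A_{\mathrm{right}}$, which for a generic $x$ lies at positive distance from $\mathbb T$. Poincar\'e recurrence does not push these measures onto the flat torus; they live all over $\Sigma_{\mathrm{left}}$.

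Second, and more seriously, the Urysohn/flow-averaging separation argument does not close the gap. Having the orbit $\{x\}\times A_{\mathrm{right}}$ disjoint from every periodic orbit in $\ONF$, with positive distance ``from each by compactness of the orbit,'' gives no \emph{uniform} separation: regular periodic orbits can pass arbitrarily close to $\{x\}\times A_{\mathrm{right}}$. The real question is whether a sequence of such orbits can spend an asymptotically full proportion of its period near $\{x\}\times A_{\mathrm{right}}$, i.e.\ whether $\delta_{v_n}\to\delta_v$ in weak-$*$. That is a quantitative dynamical statement (roughly, a shadowing orbit that enters and leaves a neighborhood of the flat circle must pay a definite time-cost, making the sojourn fraction bounded away from $1$), not something that follows from a continuous bump function plus compactness. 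This is precisely the content the paper delegates to a separate, non-trivial result: it observes that the orbit of $v$ is contained in $T^1(\Sigma_{\mathrm{left}}\times A_{\mathrm{right}})$ or $T^1(\Sigma_{\mathrm{right}}\times A_{\mathrm{left}})$, each an open product of a surface with $S^1$, and then invokes \cite{M25} to conclude that $\delta_v\notin\overline{\mathcal{M}_{per}^1(\ONF)}$. Without either citing that result or actually proving the sojourn-time estimate, your argument has a genuine gap at exactly the step you labeled routine.
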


Additionally, we illustrate how Proposition \ref{gdirac} can be used in order to obtain a better understanding of the set of finite invariant measures on the unit-tangent bundle of the Heintze-Gromov manifold: we give a precise and elementary description of $\mathcal{M}^1$, $\mathcal{M}_e^1$ and $\overline{\mathcal{M}_e^1}$ in this setting.

It is worth noticing that in general, the question of whether $\mu(\A)=1$ is a necessary condition for an ergodic probability measure to be approximated by regular Dirac measures remains open to this day. It would also be interesting to explore to what extent our descriptions of $\mathcal{M}^1$, $\mathcal{M}_e^1$ and $\overline{\mathcal{M}_e^1}$ can be generalized to the general setting of rank-one manifolds.\\

\section{The closure of the set of Dirac measures supported on periodic orbits that do not bound flat strips}
\label{sec:cdirac}

Throughout Section \ref{sec:cdirac}, $M$ will denote a compact rank-one Riemannian manifold of non-positive curvature. The unit-tangent bundle of $M$ will be denoted by $T^1M$, with the canonical projection on the base manifold denoted by $\pi: T^1M\rightarrow M$. The geodesic flow on $T^1M$ will be denoted by $(g^t)_{t\in\mathbb{R}}$.\\

\subsection{A comparison of the Sasaki metric on the unit-tangent bundle with the metric of the base manifold}
\label{sec:sasaki}

The topology of the space of invariant measures on $T^1M$ is strongly related to the geometry of $T^1M$, endowed with the Sasaki metric. This section describes the relation between the Sasaki metric on $T^1M$ and the metric of the base manifold $M$.\\

First, let us recall some definitions relevant to the dynamics of the geodesic flow in the universal cover of $M$. All the notations introduced here are based on \cite{B95}. 

The universal cover $\M$ of $M$ is a Hadamard manifold. We will denote its unit tangent bundle by $T^1\M$ and the canonical projection on $\M$ also by $\pi:T^1\M\rightarrow\M$ since there is no ambiguity with $\pi:T^1M\rightarrow M$. The visual boundary of $\M$ will be denoted by $\M(\infty)$ and for any vector $\tilde{v}\in T^1\M$, the notations $\tilde{v}(-\infty)$ and $\tilde{v}(\infty)$ will denote the two endpoints of the geodesic generated by $\tilde{v}$. We recall that two vectors $\tilde u , ~\tilde v \in T^1\tilde M$ are said to be \textit{positively asymptotic} (resp. \textit{negatively asymptotic}) if their orbits have the same endpoint at $+\infty$ (resp. $-\infty$). The subset of $T^1\tilde M$ made of vectors that are positively asymptotic to $\tilde v$ is the gradient of a $\mathcal{C}^2$ function:
$$\forall \tilde{v}\in T^1\M,\quad \forall \tilde u\in T^1\M, \quad  \tilde{u}(\infty)=\tilde{v}(\infty) \iff \tilde u= -\nabla f_{\tilde{v}(\infty)}(\pi(\tilde u))$$
where $f_{\tilde{v}(\infty)}\in\mathcal{C}^2(\M)$ is any Busemann function centered on $\tilde{v}(\infty)$. The construction and properties of those objects are very well presented in \cite{B95}.

The following lemma shows that the pull-back of the geodesic distance on $\M$ is actually equivalent to the geodesic distance on $T^1\M$, but only in restriction to the positive asymptoticity equivalence classes.

\begin{lem}\label{lem:sasaki}
Assume that the curvature of $\M$ is bounded from below by a negative constant $-b^2$. Let $\tilde u, \tilde v\in T^1\M$ be positively asymptotic and let us denote by $x=\pi(\tilde u)$ and $y=\pi(\tilde v)$ their base points. Then the following inequalities hold:
$$d(x,y) ~\leq~ d(\tilde u,\tilde v) ~\leq~ \sqrt{1+b^2}~d(x,y)$$
\end{lem}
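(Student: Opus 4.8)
The lower bound $d(x,y)\le d(\tilde u,\tilde v)$ is immediate: the projection $\pi\colon T^1\M\to\M$ is $1$-Lipschitz for the Sasaki metric, since any path in $T^1\M$ projects to a path in $\M$ of no greater length (the Sasaki metric restricted to the horizontal distribution agrees with the pullback of the metric on $\M$, and the vertical component only adds length). So the whole content is the upper bound, and the strategy is to exhibit an explicit path in $T^1\M$ from $\tilde u$ to $\tilde v$ whose Sasaki length is at most $\sqrt{1+b^2}\,d(x,y)$.

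The natural candidate is the following: let $c\colon[0,L]\to\M$ be the unit-speed geodesic from $x$ to $y$, where $L=d(x,y)$, and lift it to a path $\gamma$ in $T^1\M$ by setting $\gamma(s) = -\nabla f_{\xi}(c(s))$, where $\xi = \tilde u(\infty) = \tilde v(\infty)$ is the common endpoint at infinity and $f_\xi$ is a Busemann function centered at $\xi$. By the characterization of positive asymptoticity recalled just before the lemma, $\gamma(0)=\tilde u$ and $\gamma(L)=\tilde v$, so $\gamma$ is an admissible path and $d(\tilde u,\tilde v)\le \mathrm{length}_{\mathrm{Sasaki}}(\gamma)$. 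Now one computes the Sasaki length: at each point, $\dot\gamma(s)$ decomposes into a horizontal part, which is exactly $\dot c(s)$ and hence has norm $1$, and a vertical part, which is the covariant derivative along $c$ of the vector field $s\mapsto -\nabla f_\xi(c(s))$. The Sasaki norm squared of $\dot\gamma(s)$ is therefore $1 + \|\tfrac{D}{ds}\nabla f_\xi(c(s))\|^2$. The key point is to bound the vertical term: $\tfrac{D}{ds}\nabla f_\xi(c(s)) = (\mathrm{Hess}\, f_\xi)_{c(s)}(\dot c(s))$, and on a Hadamard manifold with curvature $\ge -b^2$ the Hessian of a Busemann function satisfies $0 \le \mathrm{Hess}\,f_\xi \le b\cdot\mathrm{Id}$ (this is the standard comparison estimate for Busemann functions / stable Jacobi fields — it is exactly the regime "in between" the flat case, where the Hessian vanishes on rays, and the $-b^2$-pinched case). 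Hence $\|\tfrac{D}{ds}\nabla f_\xi(c(s))\| \le b\,\|\dot c(s)\| = b$, so $\|\dot\gamma(s)\|_{\mathrm{Sasaki}}^2 \le 1+b^2$, and integrating over $[0,L]$ gives $\mathrm{length}_{\mathrm{Sasaki}}(\gamma)\le \sqrt{1+b^2}\,L = \sqrt{1+b^2}\,d(x,y)$, as required.

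The main obstacle is the Hessian estimate $\mathrm{Hess}\,f_\xi \le b\cdot\mathrm{Id}$; everything else is bookkeeping about the Sasaki metric. I would handle it by the standard Riccati comparison argument: $\mathrm{Hess}\,f_\xi$ at a point is the second fundamental form of the horosphere through that point, equivalently $U'(0)$ where $U(t)$ is the stable Riccati solution along the geodesic asymptotic to $\xi$; comparing with the constant-curvature model of curvature $-b^2$, whose stable solution is $U_b \equiv b\cdot\mathrm{Id}$, yields $0\le \mathrm{Hess}\,f_\xi \le b\cdot\mathrm{Id}$. This is classical and can be cited from \cite{B95} (or \cite{BGS85}), so in the write-up I would state it as a known fact about Busemann functions on Hadamard manifolds with pinched curvature rather than reprove it. One should also note that $f_\xi$ is only $\mathcal C^2$, which is exactly enough for the Hessian to make sense pointwise and for the path $\gamma$ to be $\mathcal C^1$, so the length computation is legitimate; no higher regularity is needed.
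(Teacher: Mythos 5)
Your proposal is correct and takes essentially the same route as the paper: both prove the upper bound by taking the horocyclic lift $s\mapsto -\nabla f_\xi(\gamma(s))$ of the geodesic from $x$ to $y$, decomposing its Sasaki velocity into horizontal part $\dot\gamma$ and vertical part $D_{\dot\gamma}(-\nabla f_\xi)$, and bounding the vertical part by $b$. The only cosmetic difference is that you invoke the Busemann Hessian bound $0\le\mathrm{Hess}\,f_\xi\le b\,\mathrm{Id}$ directly, whereas the paper gets the same estimate by splitting $\dot\gamma$ into its component along $-\nabla f_\xi$ (whose covariant derivative vanishes by the geodesic equation) and its orthogonal component $S^\perp$, then citing Ballmann's identifications $D_{S^\perp}(-\nabla f_\xi)=J'_{S^\perp}(0)$ and $\|J'_{S^\perp}(0)\|\le b\|S^\perp\|$ for stable Jacobi fields — which is exactly the Riccati comparison you describe, stated in Jacobi-field language.
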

\begin{proof}

The inequality $d(\pi(\tilde u),\pi(\tilde v)) \leq d(\tilde u,\tilde v)$ follows directly from the definition of the Sasaki metric. It remains to prove the second inequality. Let us write $\tau=d(\pi(\tilde u),\pi(\tilde v))$ and denote by $\gamma(s)_{0\leq s\leq\tau}$ the geodesic in $\M$ joining $x$ and $y$. 

Let $f$ be the Busemann function centered in $\tilde u(\infty)$, which means that the set of vectors positively asymptotic to $\tilde u$ is the gradient of $-f$. Since $\tilde u$ and $\tilde v$ are positively asymptotic, one has $-\nabla f(x) = \tilde u$ and $-\nabla f(y)=\tilde v$.

Let $s\in[0,\tau]$ and $z=\gamma(s)$. Within the tangent space $T_z\M$, the vector $\dot\gamma(s)$ can be decomposed as $\dot\gamma(s) = S^\perp -\nabla f(z)$ with $S^\perp$ orthogonal to $-\nabla f(z)$. According to \cite[Proposition 3.2]{B95}, the vector field $-\nabla f$ is continuously differentiable and its covariant derivative along $S^\perp$ is given by:
$$D_{S^\perp}(-\nabla f)(z) = J'_{S^\perp}(0)$$
where $J_{S^\perp} : \mathbb{R}\rightarrow T\M$ is the unique stable Jacobi field along the geodesic $\sigma_z:t\mapsto\pi\left(g^t(-\nabla f(z))\right)$ with initial condition $J_{S^\perp}(0)={S^\perp}$. Then \cite[Proposition 2.9]{B95} shows that $\left\|J_{S^\perp}'(0)\right\|\leq b\left\|J_{S^\perp}(0)\right\|$. By the geodesic equation, we also have $D_{-\nabla f (z)}(-\nabla f)(z)=0$ because at all times the derivative of the geodesic $\sigma_z$ coincides with the vector field $-\nabla f$. Finally:
$$\|D_{\dot\gamma(s)}(-\nabla f)(z)\|\leq b\|\dot\gamma(s)\| = b$$

On the other hand, $(-\nabla f(\gamma(s)))_{0\leq s\leq t}$ is a $\mathcal{C}^1$ path in $T\M$ joining $\tilde u$ and $\tilde v$, so its length is bounded below by the geodesic distance induced by the Sasaki metric. Using the fact that the derivative of $-\nabla f \circ \gamma$ can be decomposed with respect to the usual orthogonal splitting of $TT\M$ as the sum of its horizontal component $\dot \gamma$ and its vertical component $D_{\dot\gamma}(-\nabla f \circ \gamma)$, one has:
$$
\begin{aligned}
d(\tilde u,\tilde v) &\leq \int_0^\tau \left\|\frac{d}{ds}(-\nabla f(\gamma(s)))\right\|ds\\
&\leq \int_0^\tau \sqrt{\|\dot\gamma(s)\|^2 + \|D_{\dot\gamma(s)}(-\nabla f)(\gamma(s))\|^2} \, ds \\
  &\leq \tau \sqrt{1+b^2}
\end{aligned}$$\end{proof}

\begin{cor}\label{cor:sasaki}
Assume that the curvature of $\M$ is bounded from below by a negative constant $-b^2$. Let $\tilde u, \tilde v\in T^1\M$ be positively asymptotic and let us denote by $x=\pi(\tilde u)$ and $y=\pi(\tilde v)$ their base points. Then the following holds:
$$\forall s\geq0, \quad d(g^s\tilde u, g^s\tilde v) \leq d(x, y)\sqrt{1+b^2}$$
\end{cor}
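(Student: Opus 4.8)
The plan is to reduce the statement to Lemma \ref{lem:sasaki} together with a convexity argument in the Hadamard manifold $\M$. First I would observe that the geodesic flow preserves positive asymptoticity: since the geodesics generated by $\tilde u$ and $\tilde v$ share the same endpoint $\tilde u(\infty)=\tilde v(\infty)$ in $\M(\infty)$, so do the geodesics generated by $g^s\tilde u$ and $g^s\tilde v$ for every $s\in\R$. Hence Lemma \ref{lem:sasaki} applies to the pair $(g^s\tilde u, g^s\tilde v)$ and gives
$$d(g^s\tilde u, g^s\tilde v) ~\leq~ \sqrt{1+b^2}\; d\big(\pi(g^s\tilde u), \pi(g^s\tilde v)\big).$$
It therefore suffices to show that $d(\pi(g^s\tilde u), \pi(g^s\tilde v)) \leq d(x,y)$ for all $s\geq 0$.

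For this I would look at the function $\varphi\colon s\mapsto d(\pi(g^s\tilde u), \pi(g^s\tilde v))$, which is the distance between the two unit-speed geodesics $s\mapsto\pi(g^s\tilde u)$ and $s\mapsto\pi(g^s\tilde v)$ of $\M$. Because $\M$ has non-positive curvature, this distance function is convex on $\R$. Moreover, since $\tilde u$ and $\tilde v$ are positively asymptotic, the two rays $(\pi(g^s\tilde u))_{s\geq 0}$ and $(\pi(g^s\tilde v))_{s\geq 0}$ remain at bounded distance from one another, so $\varphi$ is bounded on $[0,+\infty)$. A convex function on $\R$ that is bounded above on the half-line $[0,+\infty)$ is non-increasing there: otherwise, if $\varphi(a)<\varphi(b)$ for some $0\leq a<b$, convexity would force $\varphi$ to grow at least linearly beyond $b$, contradicting boundedness. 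Hence $\varphi(s)\leq\varphi(0)=d(x,y)$ for every $s\geq 0$, and combining this with the display above finishes the proof.

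I expect no serious obstacle here: the only external inputs are the convexity of the distance between two geodesics in a Hadamard manifold and the fact that positively asymptotic rays stay at bounded distance, both of which are classical and can be quoted from \cite{B95} (or \cite{BGS85}); everything else is the elementary monotonicity argument above.
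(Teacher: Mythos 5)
Your proof is correct and follows essentially the same route as the paper: show that $\varphi(s)=d(\pi(g^s\tilde u),\pi(g^s\tilde v))$ is convex (Hadamard manifold) and bounded on $[0,+\infty)$ (positive asymptoticity), hence non-increasing, and then apply Lemma~\ref{lem:sasaki} at time $s$. You spell out two points the paper leaves implicit --- that asymptoticity is preserved by the flow, so the lemma applies to $(g^s\tilde u,g^s\tilde v)$, and why a convex function bounded above on a half-line is non-increasing there --- but the underlying argument is identical.
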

\begin{proof}
The function $s\mapsto d(\pi(g^s\tilde u), \pi(g^s\tilde v))$ is convex because $\tilde M$ is a Hadamard manifold. Thus it must be non-increasing because $\tilde u$ and $\tilde v$ are positively asymptotic. The conclusion follows from Lemma \ref{lem:sasaki}.
\end{proof}

\begin{rem} Swapping $\tilde u$ with $-\tilde u$ and $\tilde v$ with $-\tilde v$, one finds that an analogous result holds when $\tilde u$ and $\tilde v$ are negatively asymptotic.\\\end{rem}

\subsection{Weakly regular vectors}

We recall from the introduction that $\Omega_{NF}$ denotes the set of vectors whose orbit does not bound a flat strip ($M$ is compact, hence every vector is non-wandering). For any vector $v\in T^1M$, the $\epsilon$-stable and $\epsilon$-unstable sets of $v$ are:
$$W^{\epsilon,s}(v) = \left\{u\in T^1 M ~;~ \underset{t\rightarrow+\infty}{\mathrm{limsup}}~d\left(g^tu,g^tv\right) <\epsilon\right\} , \quad W^{\epsilon,u}(v) = \left\{u\in T^1 M ~;~ \underset{t\rightarrow-\infty}{\mathrm{limsup}}~d\left(g^tu,g^tv\right) <\epsilon\right\}$$ 

For any subset $S\subset  T^1M$, we denote by $W^{\epsilon,s}(S)$ (resp. $W^{\epsilon,u}(S)$) the set of vectors whose $\epsilon$-stable (resp. $\epsilon$-unstable) set intersects $S$. This is equivalent to:
$$W^{\epsilon,s}(S) = \underset{v\in S}\bigcup W^{\epsilon,s}(v)\qquad\mathrm{and}\qquad W^{\epsilon,u}(S) = \underset{v\in S}\bigcup W^{\epsilon,u}(v)$$

If $S$ is invariant (under the geodesic flow), one can verify easily that $W^{\epsilon,s}(S)$ and $W^{\epsilon,u}(S)$ are invariant.\\

As a consequence of expansivity and the shadowing lemma, the geodesic flow on a negatively curved compact manifold satisfies a local product structure which allows to connect together two orbits that intersect a given neighborhood. In non-positive curvature, a weak generalization of this result was stated by Coudène and Schapira in \cite[Lemma 4.5]{CS14}. The proof is based on \cite[Lemma 3.1]{B95}. Furthermore, basing their argument on \cite[Lemma 3.3]{B95}, Coudène and Schapira proved in \cite[Lemma 4.7]{CS14} that the geodesic flow is transitive in restriction to $\ONF $, provided that it admits at least three distinct periodic orbits and that $\Omega_{NF}$ is open in $T^1M$. These results will allow us to prove the following density lemma.

\begin{lem}\label{lem:weak_w}
Let us assume that the geodesic flow on $T^1M$ admits at least three distinct periodic orbits and that $\Omega_{NF}$ is open in $T^1M$. Then for all $ \epsilon>0$, and for all $v\in T^1M$, the set $\underset{s\in\R}{\bigcup} g^s\big(W^{\epsilon, s}(v)\big)$ is either disjoint from $\ONF $ or dense in $\ONF $.
\end{lem}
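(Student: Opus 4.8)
The plan is to show that once $\bigcup_{s\in\R} g^s(W^{\epsilon,s}(v))$ meets $\ONF$ at a single point, it must be dense there; the mechanism is transitivity of the geodesic flow in restriction to $\ONF$ (Coudène--Schapira, \cite[Lemma 4.7]{CS14}) combined with the weak local product structure of \cite[Lemma 4.5]{CS14}. So assume $w_0 \in \ONF \cap \bigcup_{s\in\R} g^s(W^{\epsilon,s}(v))$; after flowing, we may as well take $w_0 \in \ONF \cap W^{\epsilon,s}(v)$, which means $\limsup_{t\to+\infty} d(g^t w_0, g^t v) < \epsilon$. Pick a target $w_1 \in \ONF$ and a radius $\delta>0$; I want to produce a point of $\bigcup_s g^s(W^{\epsilon,s}(v))$ inside $B(w_1,\delta)$.

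First I would fix a small auxiliary parameter $\eta>0$ (to be chosen depending on $\epsilon$, $\delta$ and the comparison constant $\sqrt{1+b^2}$ from Corollary \ref{cor:sasaki}), and use topological transitivity of the flow restricted to $\ONF$ to find a time $T$ and an orbit segment from a neighborhood of $w_0$ to a neighborhood of $w_1$; more precisely, there is a vector $w_0' \in \ONF$ with $d(w_0',w_0)$ small and $d(g^T w_0', w_1)$ small. The point is that $w_0'$ is $\eta$-close to $w_0$, which already lies $\epsilon$-shadowing of $v$ in forward time. Now the key step: using the weak local product structure \cite[Lemma 4.5]{CS14} applied near $w_0$, I would construct a vector $u$ that is forward-asymptotic (or forward $\eta$-close for all large time) to $w_0$ — and hence, by the triangle inequality and the definition of $W^{\epsilon,s}$, lies in $W^{\epsilon,s}(v)$ provided $\eta$ is small enough that $\limsup_t d(g^tu,g^tv) \le \limsup_t d(g^tu,g^tw_0) + \limsup_t d(g^t w_0, g^t v) < \epsilon$ — while simultaneously $u$ is backward-asymptotic to $w_0'$. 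Flowing $u$ forward by time $T$, the backward-asymptoticity to $w_0'$ together with the convexity/contraction estimate of Corollary \ref{cor:sasaki} (lifting to $\tilde M$) forces $g^T u$ to be close to $g^T w_0'$, hence close to $w_1$; and $g^T u \in g^T(W^{\epsilon,s}(v)) \subset \bigcup_s g^s(W^{\epsilon,s}(v))$. This puts a point of the set inside $B(w_1,\delta)$, and since $w_1 \in \ONF$ and $\delta>0$ were arbitrary, density in $\ONF$ follows.

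The main obstacle is the bookkeeping in the local product step: \cite[Lemma 4.5]{CS14} only gives a \emph{weak} connecting statement in non-positive curvature (the connecting vector need not lie on the genuine strong stable/unstable manifolds, only within controlled distance, and one must be careful that it stays in a region where $\ONF$-openness lets the argument run), so I must track constants so that the $\eta$-errors accumulated from (i) moving $w_0 \rightsquigarrow w_0'$, (ii) the imperfect asymptoticity of the connecting vector $u$, and (iii) the forward flow of time $T$ — controlled via the Sasaki-vs-base-metric comparison, which is exactly why Lemma \ref{lem:sasaki} and Corollary \ref{cor:sasaki} were proved first — all stay comfortably below $\epsilon$ on the stable side and below $\delta$ near $w_1$. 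A secondary point to handle carefully is that $W^{\epsilon,s}(v)$ is defined by a \emph{strict} inequality, so I should arrange the final estimate to be strict (e.g. by first running the argument with $\epsilon/2$ in place of $\epsilon$), which is harmless and makes the $\limsup$ bound automatic.
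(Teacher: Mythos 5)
Your proposal identifies the right ingredients (transitivity of the flow on $\ONF$, the weak local product structure, and the Sasaki comparison of Lemma \ref{lem:sasaki}/Corollary \ref{cor:sasaki}) and follows the same overall strategy as the paper, but there is a directional error that breaks the argument at the final step.

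You construct $u$ so that it is backward-asymptotic to $w_0'$ and then flow $u$ \emph{forward} by time $T>0$ to land near $w_1 = g^T w_0'$, invoking Corollary \ref{cor:sasaki} to control $d(g^T u, g^T w_0')$. But Corollary \ref{cor:sasaki} (together with the remark after it) only gives a time-uniform bound on $d(g^s\tilde u, g^s\tilde w_0')$ for $s\leq 0$ when the pair is negatively asymptotic; for $s>0$ the convex function $s\mapsto d\bigl(\pi(g^s\tilde u),\pi(g^s\tilde w_0')\bigr)$ is non-decreasing and generically unbounded, so nothing prevents $d(g^T u, g^T w_0')$ from being large. A continuity-of-flow estimate could in principle control a fixed finite $T$, but that would require shrinking the local-product parameter $\delta'$ depending on $T$, and $T$ is produced \emph{after} $\delta'$ by the transitivity step, so this cannot be arranged in the order your proof runs.

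The fix is to reverse the transitivity orbit: instead of sending a neighborhood of $w_0$ forward onto a neighborhood of $w_1$, use transitivity to find $w_0'\in\ONF$ with $d(w_0',w_0)$ small and $d(g^{-T}w_0',w_1)$ small for some $T>0$ (the orbit runs from near $w_1$ forward into the $\delta'$-ball around $w_0$). Then the connecting vector $u$ produced by the weak local product structure, which is negatively asymptotic to $w_0'$, satisfies $d(g^{-T}u, g^{-T}w_0') < \delta/2$ by the negatively-asymptotic version of Corollary \ref{cor:sasaki} since $-T\leq 0$, and $g^{-T}u\in B(w_1,\delta)\cap\bigcup_s g^s(W^{\epsilon,s}(v))$. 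This is exactly what the paper's proof does: it finds $u'$ with $d(w,g^{-\tau}u')<\delta/2$, builds $u''$ negatively asymptotic to $u'$, and estimates $d(w, g^{-\tau}u'')$ along the backward flow. Everything else in your proposal — the choice of $\eta$, the triangle inequality to keep the forward shadowing of $v$ strictly below $\epsilon$, the care with strict inequalities — is in line with the paper.
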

\begin{proof}
Since $M$ is compact, we can pick a lower bound $-b^2$ for its curvature. This will enable us to apply the results from Section \ref{sec:sasaki}.\\

Let $\epsilon >0$ and $v\in T^1M$. If $v$ does not belong to $W^{\epsilon, s}(\ONF )$, then $\underset{s\in\R}{\bigcup} g^s\big(W^{\epsilon, s}(v)\big)$ is disjoint from $\ONF $. If it does, then we can find $u\in\ONF $ such that $\underset{t\rightarrow+\infty}{\mathrm{limsup}}~d\big(g^tu,g^tv\big) < \epsilon$.

Let $w\in\ONF $, $\delta>0$ and denote by $B_\delta(w)\subset T^1M$ the ball of radius $\delta$ centered in $w$. We will prove that $\underset{s\in\R}{\bigcup} g^s\big(W^{\epsilon, s}(v)\big)$ intersects $B_\delta(w)$.

Let $\eta = \underset{t\rightarrow+\infty}{\mathrm{limsup}}~d\big(g^tu,g^tv\big)$. Since there is a weak local product structure in restriction to $\ONF$ \cite[Lemma 4.5]{CS14}, we can find a positive real number $\delta'\in(0,\frac\delta{4\sqrt{1+b^2}})$ such that for all $u'\in \ONF $ with $d(u, u')<\delta'$, there exists a vector $u''\in\ONF $ and three vectors $\tilde u, \tilde u', \tilde u''\in T^1 \M$ that are respectively lifts of $u$, $u'$ and $u''$ such that $d(\tilde u, \tilde u')<\delta'$ and the following holds:
\begin{equation}\tag{$\ast$}\label{eq:star}d(\tilde u, \tilde u'')<\mathrm{min}\left(\frac{\epsilon - \eta}{\sqrt{1+b^2}}, \frac{\delta}{4\sqrt{1+b^2}}\right), \qquad \tilde u(\infty)=\tilde u''(\infty), \qquad \tilde u'(-\infty)=\tilde u''(-\infty)\end{equation}

According to \cite[Lemma 4.7]{CS14}, which states that the geodesic flow is transitive in restriction to $\ONF $, we can find a vector $u'\in\ONF $ and a time $\tau>0$ such that $d(w, g^{-\tau}u')<\frac\delta2$ and $d(u, u')<\delta'$. Let $u''\in\ONF$ and $\tilde u,\tilde u',\tilde u''\in T^1\M$ be the corresponding vectors satisfying Equation \ref{eq:star}. Then,
$$d\big(\pi(\tilde u), \pi(\tilde u'')\big)<\frac{\epsilon - \eta}{\sqrt{1+b^2}}$$
and by Corollary \ref{cor:sasaki}, the distance between $g^s\tilde u$ and $g^s\tilde u''$ is strictly bounded by $\epsilon - \eta$ for all $s\geq 0$. This bound holds also for the orbits of $u$ and $u''$, hence $u''\in W^{\epsilon,s}(v)$.

Moreover, one has:
$$d\big(\pi(\tilde u'), \pi(\tilde u'')\big)\leq d\big(\tilde u', \tilde u''\big) \leq d\big(\tilde u, \tilde u'\big)+d\big(\tilde u, \tilde u''\big)<\delta' + \frac\delta{4\sqrt{1+b^2}} < \frac\delta{2\sqrt{1+b^2}}$$
hence by Corollary \ref{cor:sasaki}, the distance between $g^s\tilde u'$ and $g^s\tilde u''$ is strictly bounded by $\frac\delta2$ for all $s\leq 0$. We can conclude the proof of the Lemma:
$$d\big(w, g^{-\tau}u''\big) \leq d\big(w, g^{-\tau}u'\big) + d\big(g^{-\tau}u', g^{-\tau}u''\big) <\delta$$
\end{proof}

We can now recall from the introduction the definition of the set of weakly regular vectors, and the equivalence is a direct consequence of Lemma \ref{lem:weak_w}.

\begin{definA}
Let us assume that the geodesic flow on $T^1M$ admits at least three distinct periodic orbits and that $\Omega_{NF}$ is open in $T^1M$. A vector $v \in T^1M$ will be said to be weakly regular if one of the following equivalent properties holds:
\begin{itemize}
    \item $v$ belongs to both $W^{\epsilon, s}(\ONF )$ and $W^{\epsilon, u}(\ONF )$ for all $\epsilon > 0$.
    \item Both $W^{\epsilon, s}(v)$ and $W^{\epsilon, u}(v)$ intersect $\ONF $ for all $\epsilon>0$.
    \item Both $\underset{s\in\R}{\bigcup} g^s\big(W^{\epsilon, s}(v)\big)$ and $\underset{s\in\R}{\bigcup} g^s\big(W^{\epsilon, u}(v)\big)$ are dense in $\ONF $ for all $\epsilon>0$.
\end{itemize}
The set of weakly regular vectors will be denoted by $\A$.
\end{definA}

\begin{rem} The definition of $\A$ can be reformulated as a single equation:
$$\A = \underset{\epsilon>0}\bigcap \big(W^{\epsilon,s}(\ONF )\cap W^{\epsilon,u}(\ONF )\big)$$
\end{rem}

\begin{rem}
The set $\A$ is invariant under the geodesic flow.
\end{rem}

\begin{rem} Let us denote by $W^{ss}(\ONF )$ (resp. $W^{su}(\ONF )$) the set of vectors whose strong stable (resp. unstable) set intersects $\ONF $. Then:
$$\ONF \subset W^{ss}(\ONF )\cap W^{ss}(\ONF ) \subset \A \subset T^1M$$
and those four sets are equal when the curvature of $M$ is negative. In the example of Section \ref{sec:ex}, the inclusions $\ONF \subset\A$ and $\A\subset T^1M$ are proper. However, we could not find any example where the inclusion $W^{ss}(\ONF )\cap W^{ss}(\ONF ) \subset \A$ is proper.\\ \end{rem}

\subsection{A sufficient condition to approximate ergodic measures by Dirac measures supported on \texorpdfstring{$\ONF$}{ONF}} Let $u\in T^1M$ be a periodic vector. The Dirac measure supported on the periodic orbit of $u$ will be denoted by $\delta_u$. Any continuous function on $T^1M$ is $\delta_u$-integrable and one has:
$$\forall f\in \mathcal{C}^0(T^1M, \mathbb{R}), \qquad \int_{T^1M}fd\delta_u =\frac1T \int_0^Tf\left(g^sv\right)ds$$
where $T$ is the period of $u$.

We are ready to state a sufficient condition for an ergodic measure to be approximated by Dirac measures supported on $\ONF$. The first step of the proof reformulates the problem essentially as a shadowing problem. The key argument of the proof is the construction of the orbit $v_{NF}$ in Step 2. In the formalism introduced in \cite{GK16}, this construction can be seen as a proof of the $\Omega_{NF}$-closeability of $\A$.

\begin{propB}Assume that $M$ is compact, connected, non-positively curved, that $\Omega_{NF}$ is open in $T^1M$ and that the geodesic flow has at least three periodic orbits that do not bound a flat strip. Then any ergodic probability measure on $\A$ is in the closure of the set of Dirac measures supported by periodic orbits that do not bound a flat strip. That is to say:
$$\mathcal{M}_e^1(\A)\subset\overline{\mathcal{M}^1_{per}(\ONF )}$$
\end{propB}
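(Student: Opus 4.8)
The plan is to follow the classical route to density of periodic measures (Sigmund, and Coudène--Schapira in this geometric setting), the one new feature being that the orbit we start from only lies in $\A$, not in $\ONF$, and so must first be perturbed into $\ONF$. \emph{Step 1 (reduction to a shadowing problem).} Fix $\mu\in\mathcal{M}_e^1(\A)$. Since the weak-$*$ topology on $\mathcal{M}^1$ is metrizable, it suffices to show that for every $\epsilon_0>0$ and every finite family $f_1,\dots,f_k\in\mathcal{C}^0(T^1M,\R)$ there is a periodic vector $p\in\ONF$ with $\big|\int f_i\,d\delta_p-\int f_i\,d\mu\big|<\epsilon_0$ for all $i$. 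By the Birkhoff ergodic theorem applied to a countable $\mathcal{C}^0$-dense family of functions, by $\mu(\A)=1$, and by Poincaré recurrence, $\mu$-almost every $v$ is simultaneously $\mu$-generic, an element of $\A$, and recurrent. Fix such a $v$; given $\epsilon_0$ and the $f_i$, first choose $\epsilon>0$ small enough in terms of the moduli of uniform continuity of the $f_i$, then a large time $T$ for which $\tfrac1T\int_0^T\delta_{g^sv}\,ds$ is $\tfrac{\epsilon_0}{2}$-close to $\mu$ on the $f_i$ and $d(g^Tv,v)<\epsilon$. It is then enough to produce a periodic $p\in\ONF$, of period $T'$ with $|T'-T|$ bounded, whose orbit $\epsilon$-shadows $\{g^sv:0\le s\le T\}$: substituting this into the comparison of empirical measures — the shadowing error handled by uniform continuity, the reparametrization error by the size of $T$ — finishes the proof.

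\emph{Step 2 (construction of the orbit $v_{NF}$).} This is the core of the argument. Since $v\in\A$, for any small auxiliary $\epsilon'>0$ both $W^{\epsilon',s}(v)$ and $W^{\epsilon',u}(v)$ meet $\ONF$, so we may fix $v^+\in\ONF\cap W^{\epsilon',s}(v)$ and $v^-\in\ONF\cap W^{\epsilon',u}(v)$. Lifting to the Hadamard cover $\M$ and using that $t\mapsto d\big(\pi g^t\tilde v^{+},\pi g^t\tilde v\big)$ is convex — hence non-increasing once the $\limsup$ bound has taken effect, so that $\tilde v^+$ and $\tilde v$ are positively asymptotic — together with Lemma~\ref{lem:sasaki} and Corollary~\ref{cor:sasaki}, one sees that the orbit of $v^+$ tracks that of $v$ to within $O(\epsilon')$ on a forward half-line, and symmetrically the orbit of $v^-$ tracks that of $v$ on a backward half-line. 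The remaining task — splicing the backward behaviour of $v^-$ and the forward behaviour of $v^+$ into a single orbit lying in $\ONF$ — is where the weak local product structure of $\ONF$ (\cite[Lemma~4.5]{CS14}) enters, iterated and combined with transitivity of the geodesic flow on $\ONF$ (\cite[Lemma~4.7]{CS14}) exactly as in the proof of Lemma~\ref{lem:weak_w}: this yields $v_{NF}\in\ONF$ a suitable lift of which is close to $\tilde v^+$ near $+\infty$ and to $\tilde v^-$ near $-\infty$. Convexity of the base-point distance in $\M$ then turns closeness at the two ends of a long interval into closeness throughout it, and Corollary~\ref{cor:sasaki} converts base-point distance into Sasaki distance; hence $d(g^sv_{NF},g^sv)<\tfrac{\epsilon}{2}$ for all $s$ in an interval containing $[0,T]$ with a margin on both sides. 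In the language of \cite{GK16}, this step is precisely a proof of the $\ONF$-closeability of $\A$.

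\emph{Step 3 (closing up), and the main difficulty.} Because $v_{NF}$ shadows $v$ beyond both ends of $[0,T]$ and $d(g^Tv,v)<\epsilon$, the vector $g^Tv_{NF}$ is close to $v_{NF}$, so the Closing Lemma on $\ONF$ (\cite{CS14}) produces a periodic $p\in\ONF$ of period close to $T$ whose orbit shadows that of $v_{NF}$, hence shadows $\{g^sv:0\le s\le T\}$; feeding $p$ back into Step~1 completes the proof. I expect Step~2 to be the true obstacle: one must manufacture an orbit that at once stays in $\ONF$ (its geodesic bounds no flat strip) and shadows the prescribed segment of $v$, while controlling the interaction between the a priori large times at which $v^{\pm}$ begin to track $v$ and the length $T$ of the segment. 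It is exactly the flat-strip obstruction that rules out the genuine local product structure of the negatively curved case and forces the recourse to the weaker statement of Coudène--Schapira.
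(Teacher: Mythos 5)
Your outline matches the paper's at the coarse level — reduce to a shadowing problem via Birkhoff and Poincaré (Step 1), construct an orbit in $\ONF$ that tracks a long piece of the orbit of $v$ (Step 2), and close it up with the Closing Lemma of \cite{CS14} (Step 3) — and you correctly identify the weak local product structure and the flat--strip obstruction as the operative ingredients. But Step 2, which you yourself single out as ``the true obstacle,'' contains a genuine gap, and it is precisely the point the paper works hardest to get around.

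You propose to splice the forward behaviour of $v^+\in\ONF\cap W^{\epsilon',s}(v)$ with the backward behaviour of $v^-\in\ONF\cap W^{\epsilon',u}(v)$ so that a lift $\tilde v_{NF}$ is ``close to $\tilde v^+$ near $+\infty$ and to $\tilde v^-$ near $-\infty$,'' and then to use convexity to propagate closeness across $[0,T]$. This cannot work as stated: $\tilde v^+$ is positively asymptotic to $\tilde v$ and $\tilde v^-$ is negatively asymptotic to $\tilde v$, so a $\tilde v_{NF}$ that stays at bounded distance from $\tilde v^+$ forward and from $\tilde v^-$ backward must have $\tilde v_{NF}(\pm\infty)=\tilde v(\pm\infty)$, i.e.\ be biasymptotic to $\tilde v$. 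By the Flat Strip Theorem, two distinct biasymptotic geodesics bound a flat strip; since $v_{NF}\in\ONF$, this forces $\tilde v_{NF}=\tilde v$, hence $v\in\ONF$. But the entire point of Proposition~\ref{gdirac} is to handle $v\in\A\setminus\ONF$. Iterating the weak local product structure and invoking transitivity, as you suggest, does not change this: the obstruction is in the asymptotics of the resulting geodesic, not in where its base point sits. Also, even if the biasymptoticity issue did not arise, the convexity step would require $v_{NF}$ to be close to $v$ at \emph{both} ends of some interval containing $[0,T]$, which is not arranged by picking a splice point; your conclusion that $v_{NF}$ $\epsilon/2$-shadows $v$ on all of $[0,T]$ with a margin does not follow.

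The missing idea, and the heart of the paper's Fact~\ref{lem:gdirac}, is to exploit the recurrence of $v$ to \emph{detach} the endpoints. One fixes $u_+,u_-\in\ONF$ near a reference vector $w$ (chosen for the Closing Lemma) with the orbit of $v$ eventually tracking $u_+$ forward and $u_-$ backward (Eq.~\ref{eq:trans}), then uses the two recurrence sequences $T_n\to+\infty$, $S_m\to-\infty$ to transport, via deck transformations $\phi_n,\phi_m$, the time-$T_n$ and time-$S_m$ pieces of these orbits back into a small neighbourhood of $\tilde v$. One then applies \cite[Lemma~3.1]{B95} to produce $\tilde u'_n\in\ONF$ with endpoints $\tilde u_n(-\infty)=(d\phi_n)^{-1}\tilde u_+(-\infty)$ and $\tilde u_{m_n}(\infty)=(d\phi_{m_n})^{-1}\tilde u_-(\infty)$. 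These are \emph{not} $\tilde v(\pm\infty)$, so no biasymptoticity with $\tilde v$ occurs; Corollary~\ref{cor:sasaki} then gives tracking of $v$ on $[S_{m_n},S_{m_1}]\cup[T_{n_1},T_n]$ only, i.e.\ outside a window of fixed length $T_{n_1}-S_{m_1}$. That window cannot be removed, but its proportion of the total period tends to zero, which is exactly what the integral estimate in Step 3 requires (see Eq.~\ref{time2}). Your Step 3 is correct once this corrected Step 2 is in place, but the Step 2 you describe would only ever produce $v_{NF}=v$.
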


\begin{proof}
\textbf{Step 1: Reformulation.} First of all, let us pick a lower bound $-b^2$ for the curvature of $M$. This is possible since $M$ is compact and will enable us to use the results from Section \ref{sec:sasaki}.\\

Let $\mu \in \mathcal{M}_e^1(\A)$, that is to say an ergodic probability measure on $T^1M$ such that $\mu(\A)=1$, and let $\mathcal{O}\subset\mathcal{M}^1$ be an open neighborhood of $\mu$. By definition of the topology of the set of probability measures on a metric space, there exist $\eta > 0$ and a finite family of bounded Lipschitz functions $(f_1,...,f_k)$, such that the following holds:
\begin{equation*}\mathcal{O} =\left\{ \nu\in\mathcal{M}^1 ~~;~~\underset{1\leq i \leq k}{\mathrm{max}}\left|\int_{T^1M}f_id\nu-\int_{T^1M}f_id\mu\right| ~<~ \eta\right\}\end{equation*}

This proof will be complete if we exhibit a periodic vector $v_{NF}\in\ONF $ such that
$\delta_{v_{NF}}$ belongs to $\mathcal{O}$.\\

Let us recall that a vector $v\in T^1M$ is said to be positively (resp. negatively) generic with respect to $\mu$ if 
$$\forall f\in L^1(\mu), \qquad \frac1t\int_0^tf\left(g^sv\right)ds \underset{t\rightarrow+\infty}{\longrightarrow} \int_{T^1M}fd\mu \qquad \left(\text{resp.}~~ \frac1t\int_0^tf\left(g^sv\right)ds \underset{t\rightarrow-\infty}{\longrightarrow} \int_{T^1M}fd\mu\right)$$

By the Poincaré Recurrence Theorem, $\mu$-almost every vector $v\in T^1M$ is positively recurrent and negatively recurrent, and by the Birkhoff Ergodic Theorem, $\mu$-almost every $v$ is positively and negatively generic. Thus, since we have assumed that $\A$ has full mass, we can pick a vector $v \in\A$ that is positively and negatively recurrent and generic with respect to $\mu$.

Since $v$ is positively and negatively recurrent, we can pick $(T_n)_{n\geq0}$ and $(S_m)_{m\leq0}$ two sequences of real numbers, indexed respectively on $\mathbb{N}$ and $\mathbb{Z}_-$, such that:
\begin{align*}&\forall n\in\N, ~~T_n>0,& \qquad &T_n\underset{n\rightarrow+\infty}\longrightarrow \infty,  &\text{and} \qquad g^{T_n}v\underset{n\rightarrow+\infty}\longrightarrow v\\
&\forall m\in\mathbb{Z}_-, ~~S_m<0,&\qquad &S_m\underset{m\rightarrow-\infty}\longrightarrow -\infty,  &\text{and} \qquad g^{S_m}v\underset{m\rightarrow-\infty}\longrightarrow v\end{align*}

Let $m \in\mathbb{Z}_-$, $n \in \mathbb{N}$. For all $i\in\{1, ..., k\}$, the integral of $f_i$ on $\mu$ can be approximated by the time-averages of $f_i$ over the orbit of $v$. Indeed, the two terms in the right-hand side of the following inequality tend to zero when $n\rightarrow+\infty$ and $m\rightarrow-\infty$ because $v$ is generic:
\begin{align*}
\left|\frac1{T_n-S_{m}}\int_{S_m}^{T_n}f_i\left(g^sv\right)ds - \int_{T^1M} f_i d\mu\right| ~\leq~  \frac{|S_m|}{T_n-S_{m}}\left|\int_{S_m}^0f\left(g^sv\right)\frac{ds}{|S_m|} - \int_{T^1M} f_i d\mu\right|\\
\qquad ~ +\frac{T_n}{T_n-S_{m}}\left| \int_0^{T_n}f\left(g^sv\right)\frac{ds}{T_n} - \int_{T^1M} f_i d\mu\right|
\end{align*}

Let us pick two integers $m_0\leq 0$ and $n_0\geq0$ such that:
$$\forall m\leq m_0, \quad \forall n\geq n_0, \quad \forall i\in\{1,...,k\},\quad \left|\frac1{T_n-S_{m}}\int_{S_m}^{T_n}f_i\left(g^sv\right)ds - \int_{T^1M} f_i d\mu\right| < \frac\eta2$$

In the next three steps of the proof, we construct a periodic vector $v_{NF}\in\ONF $ such that the difference
$$\Delta_i = \left|\int_{T^1M}f_id\delta_{v_{NF}}-\frac1{T_n-S_{m}}\int_{S_m}^{T_n}f_i\left(g^sv\right)ds\right|$$
is smaller than $\frac\eta2$ for some $m\le m_0$ and some $n\ge n_0$.\\

\textbf{Step 2: Construction of $v_{NF}$.} The functions $f_i$ are bounded and Lipschitz, so $\underset{1\leq i\leq k}{\mathrm{max}}~\mathrm{Lip}(f_i)$ and $\underset{1\leq i\leq k}{\mathrm{max}}\|f_i\|_\infty$ are finite and we can find $\epsilon>0$ such that:
$$\forall i \in \{1, ..., k\}, \quad \big(3\mathrm{Lip}(f_i) + \|f_i\|_\infty\big)\epsilon < \frac\eta2$$

Choose any vector $w\in\ONF $. Using the Closing Lemma \cite[Lemma 4.6]{CS14} near $w$ in restriction to $\ONF $, we can find $\delta>0$ and $t_0>0$ such that for all $u\in\ONF $ and $t\geq t_0$ with $d(u,w)<\delta$ and $d(g^{t}u,w)<\delta$, there exists a periodic vector $v_{NF}\in\ONF $ of period $t_{NF}\in (t-\frac\epsilon2,t+\frac\epsilon2)$ such that the orbit of $v_{NF}$ is $\frac\epsilon2$-close to the orbit of $u$ in restriction to the interval $[0,t_{NF}]$.\\

Let $\kappa = \min\Big(\frac\epsilon{16\sqrt{1+b^2}}, \frac\delta{8\sqrt{1+b^2}}\Big)$. Without loss of generality we assume that $\kappa$ is smaller than the injectivity radius of $M$. 

The fact that $v$ belongs to $\A$ enables us to find two vectors $u_+,u_-\in\ONF $ within a ball of radius $\frac\delta2$ centered at $w$ and such that the orbit of $v$ intersects $W^{\kappa, s}(u_+) \cap W^{\kappa, u}(u_-)$. In other words, there exist two real numbers $S_{trans}<0$ and $T_{trans}>0$ and two integers $m_1 \leq 0$ and $n_1 \geq 0$ such that:
\begin{equation}\label{eq:trans}\forall s\geq T_{n_1}, \quad d\big(g^{s+T_{trans}}u_+, g^{s}v\big)<\kappa\qquad\text{and}\qquad \forall s\leq S_{m_1}, \quad d\big(g^{s+S_{trans}}u_-, g^{s}v\big)<\kappa\end{equation}

We need to find vectors in $\ONF $ whose orbits shadow the orbit of $\tilde{v}$ for arbitrarily large times. Such construction would follow directly from \cite[Lemma 3.1]{B95} if the orbit of $v$ did not bound a flat strip, but in our setting, we need to somehow generalize that result: this is the purpose of Fact \ref{lem:gdirac}, whose proof will require to lift the situation to the universal cover of $M$.

\begin{fact}\label{lem:gdirac} With all the notations and assumptions introduced up to now in the proof of Proposition \ref{gdirac}, there exists a sequence of vectors $(u_n')\subset\ONF $, a sequence of negative integers $(m_n)\subset \mathbb{Z}_-$ and an integer $n_2\geq \mathrm{max}(n_0, n_1)$ such that for all $n\geq n_2$, one has $m_{n}\leq \mathrm{min}(m_0, m_1)$ and the following holds:
\begin{equation}\label{wnu+}\forall s\leq 0, \qquad d\left(g^s u_n',  g^{s+T_n+T_{trans}}u_+\right) < \mathrm{min}\Big(\frac\epsilon4, \frac\delta2\Big)\end{equation}
\begin{equation}\label{wnu-}\forall s\geq 0, \qquad d\left(g^su'_n,g^{s+S_{m_n}+S_{trans}}u_-\right) < \mathrm{min}\Big(\frac\epsilon4, \frac\delta2\Big)\end{equation}
\end{fact}

\begin{proof}
First of all, since $\kappa$ is smaller than the injectivity radius of $M$ we can pick three lift $\tilde{v}, \tilde{u}_+, \tilde{u}_-\in T^1\M$ of $v$, $u_+$ and $u_-$ (respectively) that satisfy the following inequalities:
$$\underset{s\rightarrow+\infty}{\mathrm{limsup}}~(g^{s+T_{trans}}\tilde{u}_+, g^s\tilde{v})<\kappa\qquad \text{and}\qquad \underset{s\rightarrow-\infty}{\mathrm{limsup}}~ d(g^{s+S_{trans}}\tilde{u}_-, g^s\tilde{v})<\kappa$$

The positively and negatively recurrent properties imply the existence of two sequences of isometries $(\phi_n)_{n\geq0}$ and $(\phi_m)_{m<0}$ such that:
$$\left((d\phi_n)^{-1}\circ g^{T_n}\right)\tilde{v}\underset{n\rightarrow +\infty}{\longrightarrow} \tilde{v} \qquad \text{and} \qquad \left((d\phi_m)^{-1}\circ g^{S_m}\right)\tilde{v}\underset{m\rightarrow -\infty}{\longrightarrow} \tilde{v}$$

Following Equation \ref{eq:trans}, one has:
\begin{equation}\label{eq:limsup_d_u+u-}\underset{\substack{n\to+\infty \\ m\to-\infty}}{\limsup}
~d\Big(\big((d\phi_n)^{-1}\circ g^{T_{trans}+T_n}\big)(\tilde{u}_+),\big((d\phi_m)^{-1}\circ g^{S_{trans}+S_m}\big)(\tilde{u}_-)\Big)<2\kappa\end{equation}

This leads us to introduce the following notations for all positive integers $n>0$ and negative integers $m<0$:
$$\tilde{u}_n = \left((d\phi_n)^{-1}\circ g^{T_{trans}+T_n}\right)(\tilde{u}_+)\qquad\text{and}\qquad\tilde{u}_{m} = \left((d\phi_m)^{-1}\circ g^{S_{trans}+S_m}\right)(\tilde{u}_-)$$
those sequences are contained in $\ONF $ because $\tilde{u}_+, \tilde{u}_-\in\ONF $.\\

\begin{figure}[H]
\centering
\includegraphics[angle=0, scale=0.3]{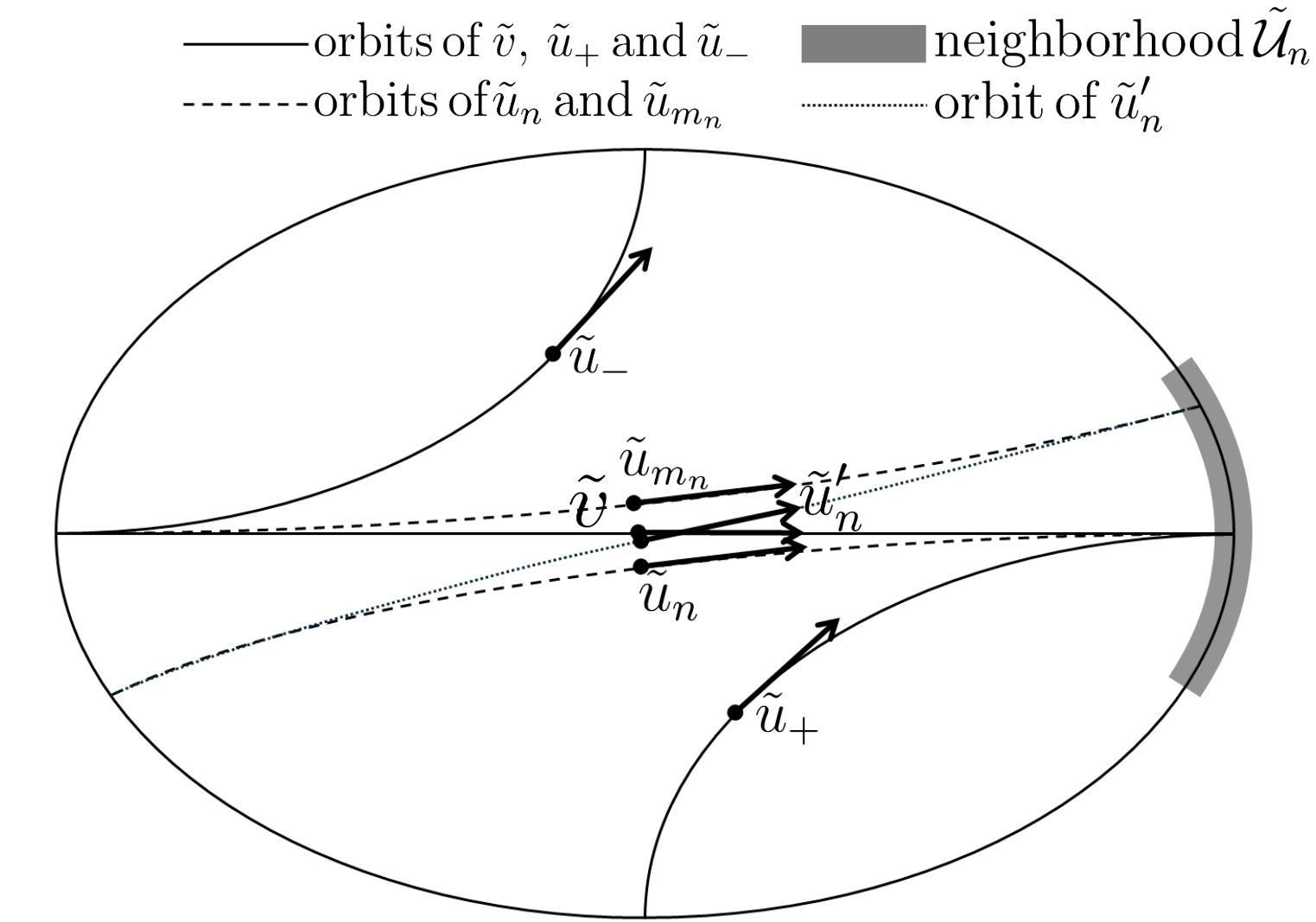}
\caption{Construction of $\tilde u'_n$ in the universal cover $\tilde M$} 
\label{fig:lem:gdirac}
\end{figure}

For each integer $n>0$, \cite[Lemma 3.1]{B95} allows us to find a neighborhood $\mathcal{\tilde U}_n$ of $\tilde u_n(\infty)=\tilde v(\infty)$ such that for all $\eta_n\in\mathcal{\tilde U}_n$, there exists a geodesic that does not bound a flat strip, that intersects the ball of radius $2\kappa$ centered in $\pi(\tilde{u}_n)$ and that has endpoints $\tilde{u}_n(-\infty)$ and $\eta_n$.

The limit $\tilde{u}_m(\infty)\underset{m\rightarrow-\infty}{\longrightarrow}\tilde{v}(\infty)$ shows that for all $n\geq \mathrm{max}(n_0,n_1)$ there exists a negative integer $m_n\leq \mathrm{min}(m_0,m_1)$ such that $\tilde{u}_{m_n}(\infty)\in\mathcal{\tilde U}_n$ and thus there exists a geodesic with endpoints $\tilde{u}_n(-\infty)$ and $\tilde{u}_{m_n}(\infty)$, and that is generated by a vector $\tilde{u}'_n\in\ONF$ whose base point $\pi(\tilde u'_n)$ belongs to the ball of radius $2\kappa$ centered in $\pi(\tilde u_n)$.

By definition, $\tilde u'_n$ is negatively asymptotic to $\tilde{u}_n$, thus by Corollary \ref{cor:sasaki} we get:
$$\forall s\leq 0, \qquad d\left(g^{s}\tilde u'_n,g^{s}\tilde{u}_n\right) < 2\kappa\sqrt{1+b^2} = \mathrm{min}\Big(\frac\epsilon8, \frac\delta4\Big)$$
and by substituting $\tilde{u}_n$ with its definition, we have proved:
\begin{equation*}\label{tilde_wnu+}\forall s\leq 0, \qquad d\big(d\phi_n\left(g^s\tilde u'_n\right),  g^{s+T_n+T_{trans}}\tilde{u}_+\big) < \mathrm{min}\Big(\frac\epsilon8, \frac\delta4\Big) \end{equation*}

Moreover, by Equation \ref{eq:limsup_d_u+u-}, we can find $n_2\geq \mathrm{max}(n_0,n_1)$ such that for all $n \geq n_2$ the distance between $\pi(\tilde u_n)$ and $\pi(\tilde{u}_{m_n})$ is bounded by $2\kappa$,
and thus $d\big(\pi(\tilde u'_n), \pi(\tilde{u}_{m_n})\big)<4\kappa$. By definition, $\tilde u'_n$ is positively asymptotic to $\tilde u_n$. Thus, substituting $\tilde u_{m_n}$ with its definition, it follows from Corollary \ref{cor:sasaki} that:
\begin{equation*}\label{tilde_wnu-}\forall s\geq 0, \qquad d\big(d\phi_{m_n}\left(g^{s}\tilde u'_{n}\right),g^{s+S_{m_n}+S_{trans}}\tilde{u}_-\big) < 4\kappa\sqrt{1+b^2} =  \mathrm{min}\Big(\frac\epsilon4, \frac\delta2\Big)\end{equation*}

Set $u'_n$ to be the projection of $\tilde{u}'_n$ on $T^1M$ and the proof of Fact \ref{lem:gdirac} is complete.\\
\end{proof}

Since $T_n-S_{m_n}\underset{n\rightarrow+\infty}\longrightarrow\infty$, we can find $n\geq n_2$ such that
\begin{equation}\label{time1}
T_{n} +T_{trans} - S_{m_n} -S_{trans} > t_0
\end{equation}
\begin{equation}\label{time2}
\frac{T_{n_1} +T_{trans} - S_{m_1} - S_{trans} + \frac\epsilon2}{T_{n} +T_{trans} - S_{m_n} -S_{trans}-\frac\epsilon2}<\epsilon
\end{equation}


Let $u = g^{-T_n - T_{trans}}u'_n$ and $t=T_n+T_{trans}-S_{m_n}-S_{trans}$. 

Equation \ref{wnu+} evaluated at $s=-T_n-T_{trans}$, combined with the fact that $u_+$ belongs to the ball of radius $\frac\delta2$ centered on $w$, shows that $d(u, w)<\delta$. Similarly, Equation \ref{wnu-} evaluated at $s = - S_{m_n} - S_{trans}$ shows that $d(g^{t}u, w)<\delta$. Finally, Equation \ref{time1} shows that $t\geq t_0$, thus $u$ and $t$ satisfy the assumptions of the Closing Lemma as stated earlier in the proof. Therefore there exists a periodic vector $v_{NF}$ of period $t_{NF}$, with $|t_{NF}-t|<\frac\epsilon2$, whose orbit is $\frac\epsilon2$-close to the orbit of $u$ in restriction to the interval $[0,t]$.\\

According to Equations \ref{eq:trans} and \ref{wnu+}, for all $s\in [T_{n_1}, T_n]$, the distance $d(g^{s+ T_{trans}}u, g^{s}v)$ is bounded by $\frac\epsilon4 + \kappa\le\frac\epsilon2$. Hence:
\begin{equation}\label{vNF1}\forall s\in [T_{n_1}, T_n],  \quad d(g^{s+ T_{trans}}v_{NF}, g^{s}v)< \epsilon\end{equation}

Similarly, it follows from Equations \ref{eq:trans} and \ref{wnu-}, that the distance $d(g^{s+t+S_{trans}}u, g^sv)$ is bounded by $\frac\epsilon2$ for all $s\in [S_{m_n},S_{m_1}]$. Hence:
\begin{equation}\label{vNF2}\forall s\in [S_{m_n}, S_{m_1}],  \quad d(g^{s+t+S_{trans}}v_{NF}, g^{s}v)< \epsilon\end{equation}\\

\textbf{Step 3: Proof that $\mathcal{O}$ contains $\delta_{v_{NF}}$.} Let $i\in\{1, ..., k\}$. We need to prove that $\Delta_i$ (defined in Step 1) is smaller than $\frac\eta2$. We know that:
$$\Delta_i
 \leq ~ \frac1{t_{NF}} \left|\int_{T^1M}f_id\delta_{v_{NF}}-\int_{S_m}^{T_n}f_i\left(g^sv\right)ds\right|
 + \left|\frac1{T_n-S_{m_n}} - \frac1{t_{NF}}\right|\int_{S_m}^{T_n}\left|f_i\left(g^{s}v\right)\right|ds$$
and the second term of the right-hand side is bounded by $\epsilon \|f_i\|_\infty$ according to Equation \ref{time2}. We will now bound the first term, using Equations \ref{time2}, \ref{vNF1}, \ref{vNF2}.
 
\begin{flalign}
~~ \left|\int_{T^1M}f_id\delta_{v_{NF}}-\int_{S_m}^{T_n}f_i\left(g^sv\right)ds\right|
~\leq ~ & \int_0^{T_{trans}}\left|f_i\left(g^sv_{NF}\right)\right|ds+\int_{t+S_{trans}}^{t_{NF}}\left|f_i\left(g^sv_{NF}\right)\right|ds&\notag\\
 & + \int_{0}^{T_{n_1}}\left|f_i\left(g^{s+T_{trans}}v_{NF}\right)-f_i\left(g^sv\right)\right|ds&\notag\\
& + \int_{T_{n_1}}^{T_n}\left|f_i\left(g^{s+T_{trans}}v_{NF}\right)-f_i\left(g^sv\right)\right|ds&\notag\\
 & + \int_{S_{m_1}}^{0}\left|f_i\left(g^{s+t+S_{trans}}v_{NF}\right)-f_i\left(g^sv\right)\right|ds&\notag\\
 & + \int_{S_m}^{S_{m_1}}\left|f_i\left(g^{s+t+S_{trans}}v_{NF}\right)-f_i\left(g^sv\right)\right|ds&\notag\\
 ~\leq ~ & \big\|f_i\big\|_\infty \big(T_{trans} + t_{NF}-t - S_{trans}+ 2 (T_{n_1}-S_{m_1})\big)\notag\\
 & + \big(T_n - T_{n_1} + S_{m_1} - S_{m_n})\mathrm{Lip}(f_i)\epsilon\notag\\
 ~\leq ~ & 2t_{NF}\big\|f_i\big\|_\infty \epsilon + t_{NF} \mathrm{Lip}(f_i)\epsilon\notag
\end{flalign}

The proof is complete:
$$\forall i \in\{1, ..., k\},\quad \Delta_i \leq \big(3\big\|f_i\big\|_\infty + \mathrm{Lip}(f_i)\big)\epsilon < \frac\eta2$$
\end{proof}

\subsection{Genericity of the set of ergodic measures in the set of invariant measures}

In this section, we prove that every invariant probability measure on $\A$ can be approximated by regular Dirac measures. As a direct consequence, we also prove that the set of ergodic probability measures $\mathcal{M}^1_e(\A)$ is a generic subset of the set of invariant probability measures $\mathcal{M}^1(\A)$.

Our result generalizes the main theorem from \cite{CS14}, according to which $\mathcal{M}^1_e(\ONF )$ is a generic subset of $\mathcal{M}^1(\ONF )$. It also narrows the gap with the conclusion of \cite{M25}, where a necessary condition on $M$ for $\mathcal{M}_e^1$ to be generic in $\mathcal{M}^1$ is introduced. However, the question of whether $\A$ is the maximal subset of $T^1M$ that satisfies Proposition \ref{genericity} remains open.

\begin{propC}Let $M$ be a compact, connected and non-positively curved manifold and assume that $\Omega_{NF}$ is open in $T^1M$ and the geodesic flow has at least three periodic orbits that do not bound a flat strip. Then $\mathcal{M}^1_e(\A)$ is a generic subset of $\mathcal{M}^1(\A)$ and the following inclusion holds:
$$\mathcal{M}^1(\A) \subset \overline{\mathcal{M}_{per}^1(\ONF )}$$ 
\end{propC}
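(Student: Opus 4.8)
The plan is to deduce Proposition C from Proposition B (our Proposition \ref{gdirac}) together with the Choquet-theoretic/ergodic-decomposition machinery already used by Coudène and Schapira in \cite{CS14}; the argument is essentially ``turning a density statement for ergodic measures into a density statement for all invariant measures, and then into a genericity statement''. First I would establish the inclusion $\mathcal{M}^1(\A)\subset\overline{\mathcal{M}_{per}^1(\ONF)}$. Fix $\mu\in\mathcal{M}^1(\A)$, i.e.\ an invariant probability measure with $\mu(\A)=1$. Since $\A$ is invariant (see the Remark following Definition A) and Borel, the ergodic decomposition $\mu=\int \nu\, d\mathbb{P}(\nu)$ of $\mu$ is supported on ergodic measures $\nu$ that also give full mass to $\A$, so $\mathbb{P}$-almost every $\nu$ lies in $\mathcal{M}_e^1(\A)$. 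By Proposition \ref{gdirac}, each such $\nu$ lies in $\overline{\mathcal{M}_{per}^1(\ONF)}$. Now $\overline{\mathcal{M}_{per}^1(\ONF)}$ is a closed subset of the (metrizable, compact, convex) space $\mathcal{M}^1$ of invariant probability measures; I would argue that it is in fact \emph{convex}, because $\mathcal{M}_{per}^1(\ONF)$ can be uniformly approximated by its own ``rational convex combinations'' realized as single periodic Dirac measures. Concretely, given periodic $v_1,\dots,v_r\in\ONF$ with periods $T_1,\dots,T_r$ and rationals $q_j\ge 0$ summing to $1$, the Closing Lemma in $\ONF$ \cite[Lemma 4.6]{CS14} together with transitivity in $\ONF$ \cite[Lemma 4.7]{CS14} lets one concatenate long pieces of the orbits of the $v_j$ in proportions close to $q_j$ and close up the resulting pseudo-orbit to a genuine periodic orbit in $\ONF$; the associated Dirac measure is then as close as we wish to $\sum_j q_j\delta_{v_j}$. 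Hence $\overline{\mathcal{M}_{per}^1(\ONF)}$ is a closed convex set containing $\mathbb{P}$-a.e.\ ergodic component of $\mu$, so it contains the barycenter $\mu=\int\nu\,d\mathbb{P}(\nu)$ as well. This proves $\mathcal{M}^1(\A)\subset\overline{\mathcal{M}_{per}^1(\ONF)}$.

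For the genericity statement, the strategy is the classical one (Sigmund, and Coudène--Schapira): in the Polish space $\mathcal{M}^1(\A)$ one shows that $\mathcal{M}_e^1(\A)$ is both dense and a $G_\delta$. Density is immediate from the inclusion just proved, since $\mathcal{M}_{per}^1(\ONF)\subset\mathcal{M}_e^1(\ONF)\subset\mathcal{M}_e^1(\A)$, and $\mathcal{M}^1(\A)\subset\overline{\mathcal{M}_{per}^1(\ONF)}\subset\overline{\mathcal{M}_e^1(\A)}$, so $\mathcal{M}_e^1(\A)$ is dense in $\mathcal{M}^1(\A)$. For the $G_\delta$ property I would use the standard criterion: an invariant probability measure $\nu$ is ergodic if and only if for every pair $(f,g)$ from a fixed countable dense subset of $C^0(T^1M)$ the time-correlation averages $\frac1T\int_0^T \big(\int f\circ g^s\cdot g\, d\nu\big)\,ds$ converge to $\int f\,d\nu\int g\,d\nu$ (equivalently, one can use that the non-ergodic measures form an $F_\sigma$ because non-ergodicity is witnessed by the existence of an invariant set of intermediate measure, detected through finitely many continuous test functions at a fixed rational precision). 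This exhibits $\mathcal{M}_e^1$ as a $G_\delta$ in $\mathcal{M}^1$, and intersecting with the closed — hence $G_\delta$ — subset $\mathcal{M}^1(\A)$ gives that $\mathcal{M}_e^1(\A)$ is a $G_\delta$ in $\mathcal{M}^1(\A)$. Combined with density, this is exactly genericity.

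The main obstacle I expect is the convexity of $\overline{\mathcal{M}_{per}^1(\ONF)}$ — i.e.\ showing that finite convex combinations of regular periodic Dirac measures are themselves limits of regular periodic Dirac measures. This is where the dynamics of $\ONF$ (transitivity and the Closing Lemma from \cite{CS14}) genuinely enter: one must build a single periodic orbit in $\ONF$ that spends a prescribed fraction of its period shadowing each $v_j$, and the delicate point is that the connecting segments supplied by transitivity, plus the error from the Closing Lemma, must have total relative length tending to $0$, all while staying inside $\ONF$. An alternative, cleaner route that sidesteps this is to invoke the abstract result of Gelfert--Kwietniak \cite{GK16}: once $\A$ (with the target $\ONF$) has been shown to satisfy closeability — which is precisely the content of the proof of Proposition \ref{gdirac}, as remarked there — and linkability, which again follows from transitivity in $\ONF$, their framework yields directly that $\mathcal{M}_{per}^1(\ONF)$ is dense in $\mathcal{M}^1(\A)$ and that ergodicity is generic there. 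If I take that route, the only remaining task is to check the hypotheses of \cite{GK16} in our continuous-time setting, citing \cite[Lemmas 4.5--4.7]{CS14} for linkability and Proposition \ref{gdirac} (Step 2, the construction of $v_{NF}$) for closeability, and the $G_\delta$/density bookkeeping above then completes the proof.
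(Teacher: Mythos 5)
Your proposal is correct and follows essentially the same route as the paper: apply the ergodic decomposition together with Proposition~\ref{gdirac} to place every ergodic component of $\mu$ inside $\overline{\mathcal{M}_{per}^1(\ONF)}$, use the fact that $\mathcal{M}_{per}^1(\ONF)$ is dense in its own convex hull (equivalently, that $\overline{\mathcal{M}_{per}^1(\ONF)}$ is convex) to pass to the barycenter, and then deduce genericity from density plus the standard $G_\delta$ characterization of ergodicity. The only differences are cosmetic: the paper simply cites \cite{CS14} for the density-in-convex-hull step rather than re-proving it via concatenation or invoking \cite{GK16}, and your parenthetical claim that $\mathcal{M}^1(\A)$ is closed in $\mathcal{M}^1$ is both unneeded and not obviously true (since $\A$ is a countable intersection that need not be closed, the set of measures giving it full mass need not be weak-$*$ closed) — but the $G_\delta$ conclusion survives without it, as intersecting a $G_\delta$ of $\mathcal{M}^1$ with \emph{any} subspace yields a $G_\delta$ of that subspace.
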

\begin{proof}
The Ergodic Decomposition Theorem (see \cite{C18, V16}) shows that the convex hull of $\mathcal{M}^1_e(\A)$ is dense in $\mathcal{M}^1(\A)$. Hence, Proposition \ref{gdirac} shows that the convex hull of $\mathcal{M}^1_{per}(\ONF)$ is also dense in $\mathcal{M}^1(\A)$.

It was proved in \cite{CS14} that under the current assumptions, $\mathcal{M}_{per}^1(\ONF )$ is dense in its own convex hull, hence we have proved the inclusion in the statement of Proposition \ref{genericity}:
$$\mathcal{M}^1(\A) \subset \overline{\mathcal{M}_{per}^1(\ONF )}$$

Moreover, $\ONF$ is contained in $\A$ and every Dirac measure is ergodic, thus it follows from this inclusion that $\mathcal{M}^1_e(\A)$ is dense in $\mathcal{M}^1(\A)$. The fact that $\mathcal{M}^1_e(\A)$ can be written as a countable intersection of open subsets of $\mathcal{M}^1(\A)$ is a well-known result, thus the set $\mathcal{M}_e^1(\A)$ is a dense $G_\delta$ subset of $\mathcal{M}^1(\A)$.
\end{proof}

\begin{rem}
In the formalism introduced in \cite{GK16}, the proof of Proposition \ref{genericity} essentially relies on the $\Omega_{NF}$-closeability of $\A$, proved in Proposition \ref{gdirac}, and the linkability of $\Omega_{NF}$, proved in \cite{CS14}.\\\end{rem}

\section{Application to the Heintze-Gromov manifold}
\label{sec:ex}

In this section, the results of Section \ref{sec:cdirac} are applied to a specific example of non-positively curved manifold. The construction of this manifold is usually attributed to Heintze or to Gromov, and has been studied by numerous authors \cite{BGS85, BCFT18, E80, G78, K98, M25}.\\

\subsection{Description of the set of weakly regular vectors}

We begin with a precise description of the Heintze-Gromov manifold. Then we decompose its unit-tangent bundle into invariant sets where the behavior of ergodic measures is easily described.

Let $\Sigma_{hyp}$ be a hyperbolic compact surface of genus two. Let $A\subset \Sigma_{hyp}$ be a simple geodesic such that $\Sigma_{hyp}\setminus A$ is not connected. Up to a rescaling, we can consider that $A$ is isometric to $S^1$. Modify the Riemannian metric of $\Sigma_{hyp}$ so that the curvature vanishes on $A$ and is negative everywhere else, in order to obtain a surface $\Sigma$ where the curvature is negative everywhere except on the central simple geodesic, that will still be denoted by $A$. 

Let $\Sigma_{\mathrm{left}}$ and $\Sigma_{\mathrm{right}}$ be the two connected components of $\Sigma\setminus A$ and let $A_{\mathrm{left}}$ and $A_{\mathrm{right}}$ be their respective boundaries within $\Sigma$: they are both isometric to $S^1$. Set: $$M_{\mathrm{left}} = (\Sigma_{\mathrm{left}} \cup A_{\mathrm{left}})\times A_{\mathrm{right}}\qquad\text{and}\qquad M_{\mathrm{right}} = A_{\mathrm{left}} \times (\Sigma_{\mathrm{right}} \cup A_{\mathrm{right}})$$
Their boundaries are isometric to the same two-dimensional flat torus $\mathbb{T} = A_{\mathrm{left}}\times A_{\mathrm{right}}$. Hence we can glue together $M_{\mathrm{left}}$ and $M_{\mathrm{right}}$ along their common boundary, so that $A_{\mathrm{left}}$ is glued to $A_{\mathrm{right}}$ and $A_{\mathrm{right}}$ is glued to $A_{\mathrm{left}}$. This construction results in the adjunction space $M = M_{\mathrm{left}} \cup_{\mathbb{T}}M_{\mathrm{right}}$.

\begin{figure}[H]
\centering
\includegraphics[angle=0, scale=0.26]{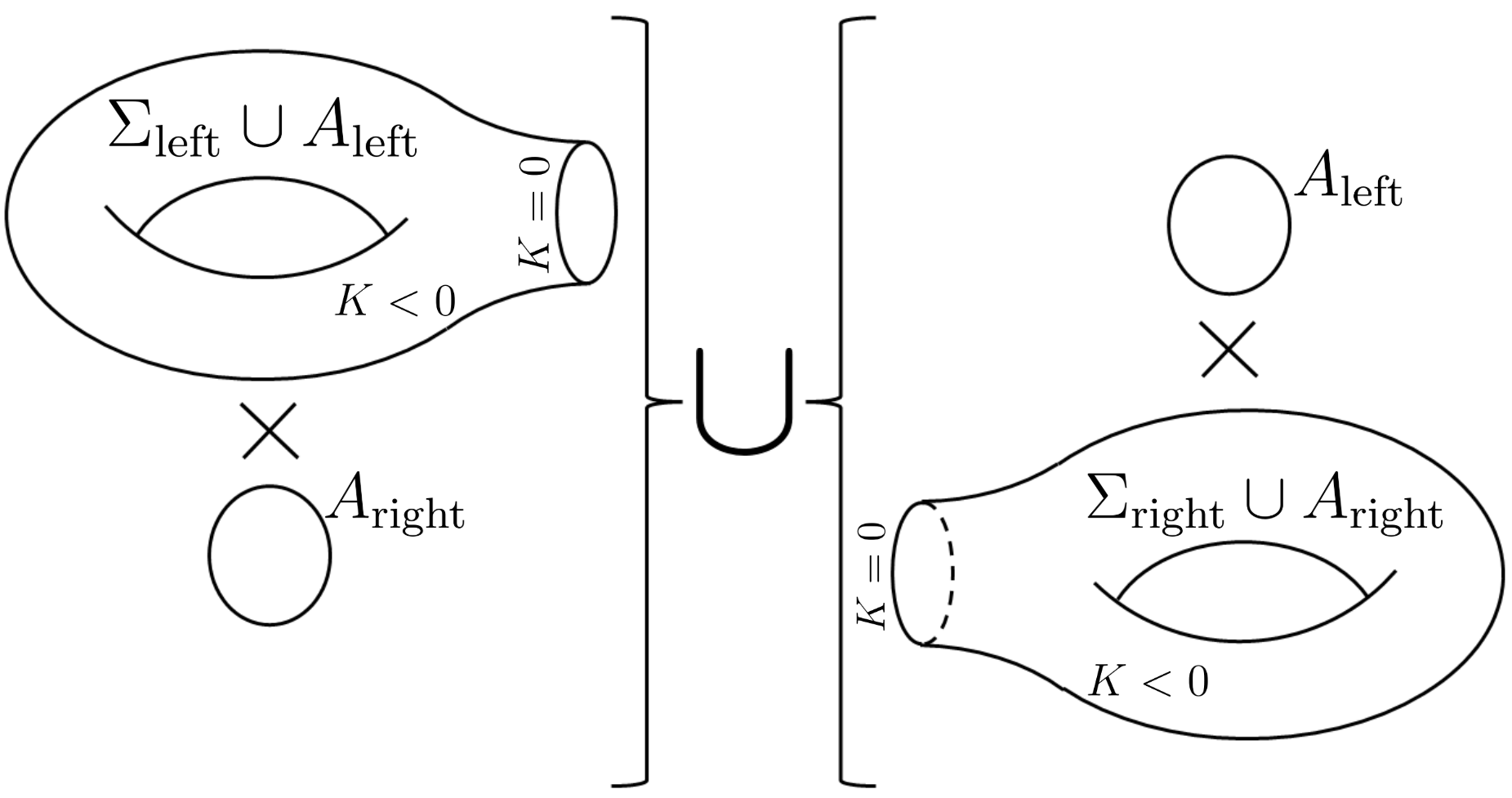}
\caption{Example of a rank-one three-dimensional manifold attributed to Heintze. $M_{\mathrm{left}}$ is represented on the left and $M_{\mathrm{right}}$ is represented on the right. The "$\cup$" symbol represents the gluing along the flat torus $\mathbb{T}$.} 
\label{fig:manifold_gromov}
\end{figure}

Notice that the presence of a totally geodesic embedded flat torus implies that $M$ is not negatively curved. On the other hand, Lemma \ref{lem:cross} shows that the set of vectors whose orbit does not bound a flat strip is non-empty and that it is equal to the set of rank-one vectors. Notably, $M$ is thus a rank-one manifold.

Moreover, Lemma \ref{lem:cross} gives a precise description of the set of weakly regular vectors, pointing out exactly which vectors constitute its complementary set $\A^c$.

We will identify the manifolds $\Sigma_{\mathrm{left}}$, $\Sigma_{\mathrm{right}}$, $A_{\mathrm{left}}$ and $A_{\mathrm{right}}$ with the zero sections of their bundles. Since $M_{\mathrm{left}} = (\Sigma_{\mathrm{left}}\cup A_{\mathrm{left}})\times A_{\mathrm{right}}$, this will enable us to identify $(\Sigma_{\mathrm{left}}\cup A_{\mathrm{left}})\times TA_{\mathrm{right}}$ as a sub-bundle of $T M_{\mathrm{left}}$, and similarly $TA_{\mathrm{left}}\times(\Sigma_{\mathrm{right}}\cup A_{\mathrm{right}})$ as a sub-bundle of $T M_{\mathrm{right}}$.

\begin{lem}\label{lem:cross} Let $\sigma:\mathbb{R}\rightarrow M$ be a unit-speed geodesic. Then the following properties are equivalent:
\begin{enumerate}[label=\roman*)]
    \item $\sigma$ is a rank-one geodesic
    \item $\sigma$ does not bound a flat strip
    \item $\sigma$ encounters the interiors of both sides of $M$, i.e.,
$$\sigma^{-1}\left(\mathring M_{\mathrm{left}}\right) \neq \emptyset \qquad \text{and} \qquad \sigma^{-1}\left(\mathring M_{\mathrm{right}}\right) \neq \emptyset$$
\end{enumerate}
Moreover, the set of non-weakly regular vectors is exactly:
$$\A^c =  (\Sigma_{\mathrm{left}}\times T^1A_\mathrm{right})\cup (T^1A_\mathrm{left}\times \Sigma_{\mathrm{right}})$$
\end{lem}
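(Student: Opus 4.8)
The plan is to classify the unit-speed geodesics of $M$ once and for all according to how they meet the gluing torus $\mathbb{T}=A_{\mathrm{left}}\times A_{\mathrm{right}}$, to read off the rank (hence the flat-strip behaviour) from this classification, and then to compute the $\epsilon$-stable and $\epsilon$-unstable sets of each type. First I would record the local geometry near $\mathbb{T}$: since $M_{\mathrm{left}}$ and $M_{\mathrm{right}}$ are Riemannian products whose boundaries are a geodesic of the surface factor times the circle factor, both boundaries are totally geodesic, so $\mathbb{T}$ is totally geodesic in $M$, and in Fermi-type coordinates $(r,a,\theta)$ around $\mathbb{T}$ the metric is a doubly warped product $dr^{2}+h_{1}(r)^{2}\,da^{2}+h_{2}(r)^{2}\,d\theta^{2}$ with $h_{i}(0)=1$ and $h_i'(0)=h_i''(0)=0$; in particular the whole curvature tensor vanishes along $\mathbb{T}$, and the flat fibre direction $\partial_\theta$ of the $M_{\mathrm{left}}$ side is exactly the surface-tangent direction of the $M_{\mathrm{right}}$ side, and vice versa — this ``switch'' is the crux. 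Because $\mathbb{T}$ is totally geodesic and the curvature is non-positive, $t\mapsto d(\sigma(t),\mathbb{T})$ is convex on the embedded tube around $\mathbb{T}$, and distinct lifts of $\mathbb{T}$ to $\M$ are uniformly separated; hence any geodesic either lies in $\mathbb{T}$ or meets it only in isolated transversal crossings. Combining this with the product structure on each side yields the trichotomy: either (A) $\sigma$ enters both $\mathring M_{\mathrm{left}}$ and $\mathring M_{\mathrm{right}}$; or (B) $\sigma\subset M_{\mathrm{left}}$, and then $\sigma\subset\mathbb{T}$, or $\sigma$ is a fibre circle $\{p\}\times A_{\mathrm{right}}$ with $p\in\Sigma_{\mathrm{left}}$, or $\sigma=(\gamma,\theta_{0}+ct)$ with $\gamma$ a non-constant geodesic of $\Sigma_{\mathrm{left}}$; or (C) the mirror of (B) on the $M_{\mathrm{right}}$ side.

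For the equivalences, $(ii)\Leftrightarrow(i)$ is the standard characterisation of rank (rank $\ge2$ iff there is a parallel perpendicular Jacobi field iff the orbit bounds a flat strip in $\M$, and a flat strip in $M$ lifts to $\M$). For $(i)\Rightarrow(iii)$: a geodesic of type (B) or (C) carries a parallel perpendicular Jacobi field — the flat fibre direction $\partial_\theta$ along a geodesic in $\mathring M_{\mathrm{left}}$, or a direction tangent to $\mathbb{T}$ along a $\mathbb{T}$-geodesic — each parallel along $\sigma$ with $R(\cdot,\dot\sigma)\dot\sigma\equiv0$, so $\operatorname{rank}\sigma\ge2$; hence a rank-one geodesic must be of type (A). The key point is $(iii)\Rightarrow(i)$: let $\sigma$ be of type (A) and $J$ a parallel Jacobi field along $\sigma$. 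On a segment in $\mathring M_{\mathrm{left}}$ one has $\sigma=(\gamma,\theta_0+c_Lt)$ with $\gamma$ entering the strictly negatively curved $\Sigma_{\mathrm{left}}$, so a parallel Jacobi field there is forced to have surface component a multiple of $\dot\gamma$, giving $J=a\dot\gamma+p\,\partial_\theta$; on a segment in $\mathring M_{\mathrm{right}}$, likewise $J=q\,\partial_{\theta'}+a'\dot\gamma'$ with $\partial_{\theta'}$ the flat fibre direction of $M_{\mathrm{right}}$. At a transversal crossing point the normal component of $\dot\sigma$ matches, and by the switch $\partial_\theta$ is the surface-tangent direction of the right side while $\partial_{\theta'}$ is that of the left side; matching $J$ in $T\mathbb{T}\oplus\nu$ then forces $a=a'$, $p=ac_L$, $q=ac_R$, i.e.\ $J=a\dot\sigma$ on both adjacent segments, hence $J=a\dot\sigma$ everywhere (two parallel fields agreeing on an open set). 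So $\operatorname{rank}\sigma=1$. This also gives, with the first part, that $\ONF$ is exactly the set of type-(A) vectors, which is open.

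To get $\A^{c}\supseteq(\Sigma_{\mathrm{left}}\times T^{1}A_{\mathrm{right}})\cup(T^{1}A_{\mathrm{left}}\times\Sigma_{\mathrm{right}})$, I would take $v$ generating a fibre circle $\{p\}\times A_{\mathrm{right}}$ with $p\in\Sigma_{\mathrm{left}}$ (or its mirror). Its orbit is a closed geodesic at distance $\delta_{0}=d(p,A_{\mathrm{left}})>0$ from $\mathbb{T}$, so for $\epsilon<\delta_0$ small, any $u\in W^{\epsilon,s}(v)$ has $g^{t}u$ eventually $\epsilon$-close to this circle, forcing $\sigma_u$ to lie in $\mathring M_{\mathrm{left}}$ with surface component $\gamma_u$ staying in $B_{\Sigma_{\mathrm{left}}}(p,\epsilon)$; by the Hessian lower bound for $d(\cdot,p)$ in the strictly negatively curved $\Sigma_{\mathrm{left}}$, a positive-speed geodesic cannot remain in a small ball, so $\gamma_u$ is constant and $\sigma_u$ is itself such a fibre circle, hence of type (B) and not in $\ONF$. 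Thus $W^{\epsilon,s}(v)\cap\ONF=\emptyset$, so $v\in\A^{c}$.

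For the reverse inclusion I must show every vector not of that form is weakly regular. Type-(A) vectors lie in $\ONF\subset\A$. For a type-(B) vector $v=(\gamma,\theta_0+ct)$ with $\gamma$ non-constant, view $\gamma$ as a geodesic of the compact rank-one surface $D$ obtained by doubling $\Sigma_{\mathrm{left}}\cup A_{\mathrm{left}}$ along $A_{\mathrm{left}}$; since $\gamma$ runs in the strictly negatively curved interior, $\dot\gamma\in\Omega_{NF}(D)$, so its strong stable set is dense in $\Omega_{NF}(D)$ (transitivity plus the local product structure of Coudène–Schapira applied to $D$), and therefore contains a vector whose geodesic $\gamma_u$ crosses $A_{\mathrm{left}}$; being forward asymptotic to $\gamma$, $\gamma_u$ is eventually on the $\Sigma_{\mathrm{left}}$-side, so in $M$ the geodesic $(\gamma_u,\theta_0+ct)$ (taken after the last crossing) crosses $\mathbb{T}$ transversally into $\mathring M_{\mathrm{right}}$ while being forward asymptotic to $v$ — it is of type (A), hence in $\ONF\cap W^{ss}(v)$. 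Running the same argument with the strong unstable set gives a point of $\ONF\cap W^{su}(v)$, so $v\in\A$; type (C) is symmetric, and the remaining $\mathbb{T}$-geodesics are treated in the same spirit by approximating them with geodesics that cross $\mathbb{T}$. The main obstacle is this last step: one needs the density of the strong stable/unstable sets of rank-one vectors in the doubled surfaces, a careful check that the shadowing geodesic genuinely leaves $M_{\mathrm{left}}$ (resp.\ $M_{\mathrm{right}}$), and the handling of the boundary cases (geodesics asymptotic to $A_{\mathrm{left}}$ and geodesics inside $\mathbb{T}$); the genuinely new idea, by contrast, is the Jacobi-field matching across $\mathbb{T}$ in Step $(iii)\Rightarrow(i)$, which is precisely where the switch in the gluing is used.
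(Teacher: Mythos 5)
Your proof follows the same skeleton as the paper's: classify geodesics by how they meet $\mathbb{T}$, deduce rank one from a parallel--Jacobi-field matching at a transversal crossing, and characterise $\A^{c}$ type by type, using transitivity and the local product structure on an auxiliary surface to produce $\ONF$-vectors in $W^{ss}(v)$ and $W^{su}(v)$. Your Jacobi-field computation is, if anything, more careful than what the paper literally writes: the paper asserts that the surface component of $J$ vanishes on each side of the crossing, whereas the Jacobi equation on a product of a negatively curved surface with a circle only forces that component to be proportional to $\dot\gamma$; you track this, decompose $J(t_1)$ in $T\mathbb{T}\oplus\nu$, use the switch of the flat and surface directions across the gluing, and conclude $J=a\dot\sigma$, which is the correct route to the contradiction. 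Two points to tighten. First, the auxiliary surface should be the original $\Sigma$ (already available, smooth, compact, with no flat strips, and containing $\Sigma_{\mathrm{right}}$ on the far side of $A$) rather than the double $D$ of $\Sigma_{\mathrm{left}}\cup A_{\mathrm{left}}$: even with $A_{\mathrm{left}}$ totally geodesic, the doubled metric is only $C^{2}$ in general, so applying Coudène--Schapira to $D$ requires a regularity check that using $\Sigma$ avoids entirely. Second, the $\mathbb{T}$-geodesics (and the fibre circles $\{p\}\times A_{\mathrm{right}}$ with $p\in A_{\mathrm{left}}$) are handled in the paper by observing that such a geodesic, read in the $M_{\mathrm{right}}$ product decomposition, has nonzero surface component, so the same construction of $\ONF$-vectors in its strong stable and unstable sets applies with the two sides swapped; your ``treated in the same spirit'' leaves this short but genuine step unstated, and it is needed to complete the inclusion $\A^{c}\subset(\Sigma_{\mathrm{left}}\times T^1A_\mathrm{right})\cup (T^1A_\mathrm{left}\times \Sigma_{\mathrm{right}})$.
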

\begin{rem}
In particular, all vectors in $\A^c$ are periodic and their period is the same.
\end{rem}
\begin{proof} 
\textbf{Step 1: Proof of the equivalence.} Let us assume that $\sigma$ encounters both $\mathring M_{\mathrm{left}}$ and $\mathring M_{\mathrm{right}}$. The set $\sigma^{-1}\left(\mathring M_{\mathrm{left}}\right)$ is an open non-empty subset of $\mathbb{R}$, so it is a disjoint union of open intervals. Let $]t_0, t_1[$ be such an interval, and without loss of generality let us assume that $t_1\ne+\infty$. Then $\sigma(t_1)$ belongs to the central torus $\mathbb{T}$ but $\dot\sigma(t_1)$ is not tangent to it, otherwise the whole geodesic $\sigma$ would be contained in $\mathbb{T}$. Hence, we can find $t_2>t_1$ such that $\sigma(]t_1, t_2[)\subset\mathring M_{\mathrm{right}}$.

In order to prove \textit{i)}, we need to prove that the vector space of parallel Jacobi fields along $\sigma$ has dimension one. Let us assume that there exists a non-zero parallel Jacobi field $J$ along $\sigma$ with $J(t_1)$ not collinear to $\dot\sigma(t_1)$, and let us find a contradiction.

Since $J$ is parallel, the Jacobi equation can be simply rewritten as $R(J, \dot\sigma)=0$. Recall that the tangent bundle of $\mathring M_{\mathrm{left}}$ is identified with the Whitney sum of its sub-bundles $T\Sigma_{\mathrm{left}}\times A_{\mathrm{right}}$ and $\Sigma_{\mathrm{left}}\times TA_{\mathrm{right}}$. Since $\Sigma_{\mathrm{left}}$ has negative curvature, the projection of $J$ on $T\Sigma_{\mathrm{left}}\times A_\mathrm{right}$ must vanish in restriction to $]t_0, t_1[$. By continuity of $J$, this implies that $J(t_1)$ is tangent to $A_{\mathrm{left}}$ (as a submanifold of $\mathbb{T}$). Similarly, the projection of $J$ on $T\Sigma_{\mathrm{right}}$ must vanish in restriction to $]t_1, t_2[$, and this implies that $J(t_1)$ is tangent to $A_{\mathrm{right}}$, which is contradictory and proves the implication \textit{iii)}$\implies$\textit{i)}.\\

Now, let us assume that $\sigma$ does not encounter both $\mathring M_{\mathrm{left}}$ and $\mathring M_{\mathrm{right}}$. Without loss of generality, we can assume that its image is contained in $M_{\mathrm{left}}$.

Let us decompose $\sigma = (p, \theta)$ with $p:\mathbb{R}\rightarrow \Sigma_{\mathrm{left}}\cup A_{\mathrm{left}}$ and $\theta:\mathbb{R}\rightarrow A_{\mathrm{right}}$. Since $p$ is a geodesic of $\Sigma_{\mathrm{left}}\cup A_{\mathrm{left}}$, the norm $\|\dot p(t)\|$ is constant with respect to $t$. One of the following situations must hold:

\begin{enumerate}[label=\alph*)]
    \item If $\|\dot p\|\neq0$, the submanifold $p(\mathbb{R})\times A_{\mathrm{right}}$ is a flat strip containing $\sigma$.
    \item If $\|\dot p\|=0$ and $p(\mathbb{R})\subset\Sigma_{\mathrm{left}}$, then the image of $p$ is only one point, that we may call $p_0$. Since $\Sigma_{\mathrm{left}}$ is a manifold without boundary, there exists $\epsilon>0$ such that the geodesic ball of radius $\epsilon$ centered on $p_0$ is contained in $\Sigma_{\mathrm{left}}$. We will denote this ball by $B_\epsilon(p_0)$. Since $\sigma$ is contained in $B_\epsilon(p_0) \times A_{\mathrm{right}}$, it bounds a flat strip.
    \item If $\|\dot p\|=0$ and $p(\mathbb{R})\not\subset\Sigma_{\mathrm{left}}$, then $\sigma$ is entirely contained in $\mathbb{T}$, and the conclusion follows from a), swapping $M_{\mathrm{left}}$ with $M_{\mathrm{right}}$ and $p$ with $\theta$, which has non-zero norm since $\|\dot p\|=0$.
\end{enumerate}
We have just proved the implication \textit{ii)}$\implies$\textit{iii)}. The implication \textit{i)}$\implies$\textit{ii)} is always true. Indeed, if a geodesic bounds a flat strip, then its rank is at least two.\\

\textbf{Step 2: Description of $\A^c$.} If $\dot \sigma$ is valued in $(\Sigma_{\mathrm{left}}\times T^1A_\mathrm{right})\cup (T^1A_\mathrm{left}\times \Sigma_{\mathrm{right}})$, then without loss of generality the situation b) discribed in Step 1 holds. Thus $B_\epsilon(p_0)\times A$ is an open subset of $M$ that contains $\sigma$, which implies that the $\epsilon$-stable set of $\dot \sigma(0)$ is contained in $B_\epsilon(p_0)\times A$, which is disjoint from $\Omega_{NF}$. This proves that:
$$(\Sigma_{\mathrm{left}}\times T^1A_\mathrm{right})\cup (T^1A_\mathrm{left}\times \Sigma_{\mathrm{right}}) \subset \A^c$$

The following fact will greatly simplify the proof of the converse inclusion.

\begin{fact}\label{fact:lem:inclusion} Let $v\in T^1M_{\mathrm{left}}$, and set $v_\Sigma$ its projection on $T(\Sigma_{\mathrm{left}}\cup A_{\mathrm{left}})\times A_\mathrm{right}$ and $\omega$ its projection on $(\Sigma_{\mathrm{left}}\cup A_{\mathrm{left}})\times TA_{\mathrm{right}}$. Assume that $v_\Sigma\neq0$ and that the orbit of $v$ is contained in $T^1M_{\mathrm{left}}$. Then $v$ is weakly regular.
\end{fact}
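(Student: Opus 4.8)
The plan is to show directly that $v$ satisfies the first characterization of weak regularity from Definition A: for every $\epsilon>0$, both $W^{\epsilon,s}(v)$ and $W^{\epsilon,u}(v)$ meet $\ONF$. Since the orbit of $v$ stays inside $T^1M_{\mathrm{left}}$ and $v_\Sigma\neq0$, I first argue that $p:=\pi\circ(\text{the }\Sigma\text{-component of the orbit})$ is a nonconstant geodesic of $\Sigma_{\mathrm{left}}\cup A_{\mathrm{left}}$ (with constant nonzero speed $\|\dot p\|=\|v_\Sigma\|$), while the $A_{\mathrm{right}}$-component $\theta$ moves at constant speed $\|\omega\|$ on the circle $A_{\mathrm{right}}$; so the orbit of $v$ is the product of a geodesic in $\Sigma_{\mathrm{left}}\cup A_{\mathrm{left}}$ with a (possibly constant) rotation of $A_{\mathrm{right}}$. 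The key geometric input is that a nonconstant geodesic $p$ in $\Sigma_{\mathrm{left}}\cup A_{\mathrm{left}}$, being in a negatively curved region away from its (possible) excursions into the boundary circle $A_{\mathrm{left}}$, is contracted in forward time: I want to produce, for arbitrarily small $\epsilon$, a vector $u\in\ONF$ whose forward orbit stays within $\epsilon$ of the orbit of $v$.

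The construction of $u$ goes as follows. Pick a rank-one vector $w_0\in\ONF$ — it exists by the equivalence proved in Step 1 — and note that, by the local product structure in restriction to $\ONF$ (\cite[Lemma 4.5]{CS14}) together with transitivity of the flow in $\ONF$ (\cite[Lemma 4.7]{CS14}), there are vectors in $\ONF$ arbitrarily close to $v$ in $T^1M$. The subtlety is that closeness in $T^1M$ does not by itself give forward closeness of orbits; I need a vector $u\in\ONF$ that is positively asymptotic (in $T^1\M$, after lifting) to a lift of $v$. Here I lift $v$ to $\tilde v\in T^1\M$; since the orbit of $v$ lies in a copy of $(\Sigma_{\mathrm{left}}\cup A_{\mathrm{left}})\times A_{\mathrm{right}}$, whose universal cover is $\widetilde{\Sigma_{\mathrm{left}}}\times\R$, the endpoint $\tilde v(\infty)$ lies in the corresponding part of the boundary. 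Using \cite[Lemma 3.1]{B95} exactly as in Fact \ref{lem:gdirac}, I can find rank-one geodesics with one endpoint equal to $\tilde v(\infty)$ and the other endpoint ranging over a neighborhood of $\tilde v(-\infty)$, passing within $2\kappa$ of $\pi(\tilde v)$ for any prescribed small $\kappa$; choosing the other endpoint to be the endpoint of a genuinely rank-one geodesic (one whose projection enters both $\mathring M_{\mathrm{left}}$ and $\mathring M_{\mathrm{right}}$), such a geodesic does not bound a flat strip. Its generating vector $\tilde u$ is positively asymptotic to $\tilde v$, so Corollary \ref{cor:sasaki} gives $d(g^s\tilde u,g^s\tilde v)\le 2\kappa\sqrt{1+b^2}$ for all $s\ge0$; projecting to $T^1M$ and taking $\kappa$ small enough that $2\kappa\sqrt{1+b^2}<\epsilon$ yields $u\in W^{\epsilon,s}(v)\cap\ONF$. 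The unstable case is symmetric, using the remark after Corollary \ref{cor:sasaki} and negatively asymptotic vectors, so $v\in W^{\epsilon,u}(\ONF)$ as well; hence $v\in\A$.

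The main obstacle I anticipate is the case $v_\Sigma\neq0$ but $p$ still meets the boundary circle $A_{\mathrm{left}}$ (where curvature vanishes), or the case where $\omega\ne0$ so that the orbit of $v$ genuinely spirals in the flat torus direction: one must be sure that \cite[Lemma 3.1]{B95} still applies at a basepoint $\pi(\tilde v)$ lying on the flat locus, and that the boundary endpoint $\tilde v(\infty)$ is not itself the endpoint of a flat strip — which is exactly where the hypothesis $v_\Sigma\neq0$ enters, since it prevents the orbit from being one of the vectors of $\A^c$ identified in Step 2. I would handle this by observing that a nonconstant geodesic $p$ of $\Sigma_{\mathrm{left}}\cup A_{\mathrm{left}}$ cannot remain on $A_{\mathrm{left}}$ forever unless it equals $A_{\mathrm{left}}$ itself (an isolated closed geodesic), and even then the product orbit $A_{\mathrm{left}}\times\{\text{pt}\}$ or $A_{\mathrm{left}}\times A_{\mathrm{right}}$-rotation has a well-defined rank-one geodesic accumulating on its stable horosphere; the key point is just that $\tilde v(\infty)$ admits, in every neighborhood, the forward endpoint of a rank-one geodesic passing near $\pi(\tilde v)$, which is precisely the conclusion of \cite[Lemma 3.1]{B95} applied with the regular point $w_0$ from Step 1 supplying a rank-one direction nearby.
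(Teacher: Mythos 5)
There is a genuine gap in your argument, and it lies at the very step you flag as the ``main obstacle.''

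You want to apply \cite[Lemma~3.1]{B95} at the lift $\tilde v$ to produce a rank-one geodesic whose forward endpoint is exactly $\tilde v(\infty)$. But that lemma requires the vector at which it is applied to be \emph{rank one}, and $\tilde v$ is not: by hypothesis the orbit of $v$ stays inside $T^1M_{\mathrm{left}}$, so by Step~1 of Lemma~\ref{lem:cross} the geodesic $\sigma$ generated by $v$ bounds a flat strip (case a), b) or c)), hence $\mathrm{rank}(v)\geq 2$. In Fact~\ref{lem:gdirac}, \cite[Lemma~3.1]{B95} is applied at vectors $\tilde u_n$ which are translates of $\tilde u_+\in\ONF$ and are therefore rank one; that is not the situation here. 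Your last paragraph recognizes the difficulty but does not resolve it: noting that $v_\Sigma\neq0$ keeps $v$ out of $\A^c$ is a consequence of the Fact, not a hypothesis you may invoke, and ``$w_0$ supplying a rank-one direction nearby'' does not produce a rank-one geodesic whose endpoint at $+\infty$ equals $\tilde v(\infty)$ — which is what you actually need, and is essentially the content of the statement. As written, the argument is close to circular.

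The paper's proof sidesteps this entirely by exploiting the product structure of $M_{\mathrm{left}}=(\Sigma_{\mathrm{left}}\cup A_{\mathrm{left}})\times A_{\mathrm{right}}$. As long as the orbit stays in $T^1M_{\mathrm{left}}$ the geodesic flow splits as $g^tv=g^tv_\Sigma+g^t\omega$. Since $v_\Sigma\neq0$ this is a genuine (non-unit) geodesic of the surface $\Sigma$, where there are \emph{no} flat strips; the Coud\`ene--Schapira transitivity and local product structure on $T^1\Sigma$ then yield $u_\Sigma\in W^{ss}(v_\Sigma)$ of the same norm whose forward orbit stays in $\Sigma_{\mathrm{left}}$ and whose backward orbit enters $\Sigma_{\mathrm{right}}$. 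Setting $u=u_\Sigma+\omega$ one gets $\|u\|=1$, $u\in W^{ss}(v)$ (because the product decomposition persists in forward time), and $u\in\ONF$ (because the orbit of $u$ crosses both sides, by Step~1). The point is to do the asymptoticity argument in the strictly negatively curved (flat-strip-free) factor $\Sigma$ rather than in $M$ itself, where the flat strip at $v$ blocks the direct application of Ballmann's lemma. If you want to rescue a lifted argument, you would need an analogue of \cite[Lemma~3.1]{B95} that produces rank-one geodesics asymptotic to a \emph{given} rank-$\geq2$ geodesic, which is not what that lemma provides.
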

\begin{proof}
Because the orbit of $v$ is contained in $T^1M_{\mathrm{left}}$, for all $t\in\mathbb{R}$, the geodesic flow splits along the sum of $T(\Sigma_{\mathrm{left}}\cup A_{\mathrm{left}})\times A_{\mathrm{right}}$ and  $(\Sigma_{\mathrm{left}}\cup A_{\mathrm{left}})\times TA_{\mathrm{right}}$:
$$\forall t \in\R, \quad g^tv = g^tv_\Sigma + g^t\omega$$

Recall that $\Sigma$ is a surface of genus two and negative curvature everywhere except on the periodic orbit $A$. In particular, there are no flat strips in $\Sigma$, thus the geodesic flow on $T^1\Sigma$ is transitive and admits a local-product structure \cite{CS14}. Hence it is possible to find a vector $u_\Sigma\in W^{ss}(v_\Sigma)$ with the same norm as $v_\Sigma$ and such that:
$$\forall t> 0, \quad \pi\left(g^tu_\Sigma\right)\in \Sigma_{\mathrm{left}}, \qquad \text{and}\qquad \exists t < 0, \quad \left(g^tu_\Sigma\right)\in \Sigma_{\mathrm{right}}$$

\begin{figure}[H]
\centering
\includegraphics[angle=0, scale=0.25]{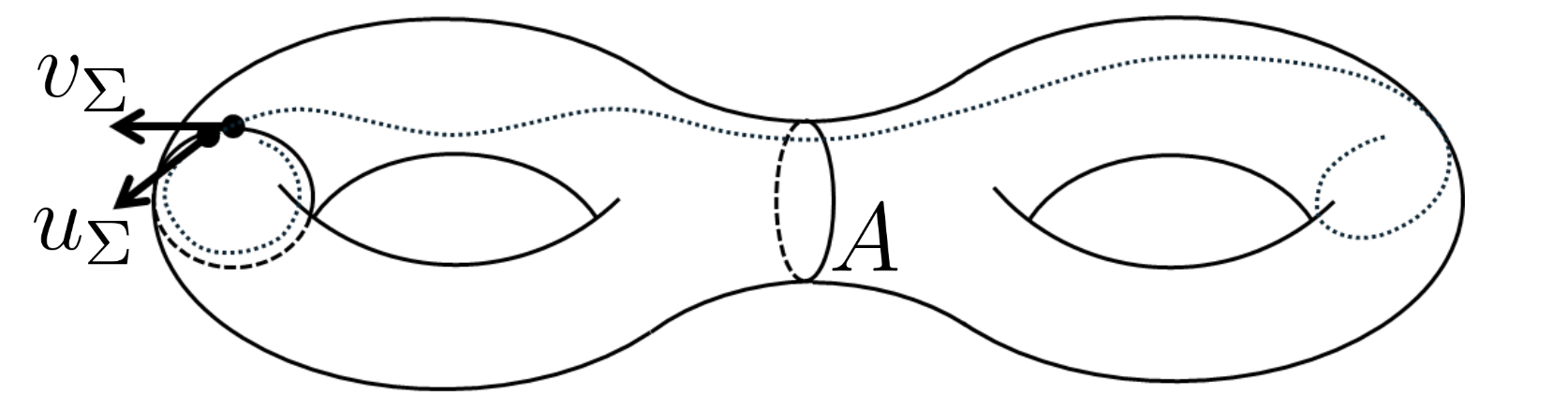}
\caption{Orbit of $u_\Sigma$ and $v_\Sigma$ in $\Sigma$} 
\label{fig:QCS}
\end{figure}

Let 
$$u = u_\Sigma + \omega \in T(\Sigma_{\mathrm{left}}\cup A_{\mathrm{left}})\oplus TA_{\mathrm{right}}$$

Then $\|u\| = \sqrt{\|v_\Sigma\|^2 + \|\omega\|^2} = \|v\| = 1$ holds and $u$ belongs to the strong stable set of $v$ because for all $t>0$, the projection of $g^tu$ on $T(\Sigma_{\mathrm{left}}\cup A_{\mathrm{left}})\times A_{\mathrm{right}}$ is $g^tu_\Sigma$, and its projection on $(\Sigma_{\mathrm{left}}\cup A_{\mathrm{left}})\times TA_{\mathrm{right}}$ is $g^t\omega$. Moreover, the orbit of $u$ cannot be entirely contained in $M_{\mathrm{left}}$, because the orbit of $u_\Sigma$ intersects $\Sigma_{\mathrm{right}}$, thus, by Step 1 we conclude that $u\in\ONF $. Similarly, one can also exhibit a vector $u'\in \ONF\cap W^{su}(v)$, proving that $v$ is weakly regular.
\end{proof}

\begin{rem}
A result analogous to Fact \ref{fact:lem:inclusion} is true for $v\in T^1M_{\mathrm{right}}$.
\end{rem}

To conclude the second step of the proof of Lemma \ref{lem:cross}, let us assume that $\sigma$ is valued in $\A^c$. Then, by Step 1, $\sigma$ does not encounter the interior of both sides of $M$. Without loss of generality, we can assume that its image is contained in $M_{\mathrm{left}}$, and one of the situations a), b) or c) presented in the first step of the proof must hold. However, a) and c) correspond to weakly regular geodesics according to Fact \ref{fact:lem:inclusion}: since we have assumed that $\dot \sigma$ is valued in $\A^c$, the situation b) must hold, which means that $\dot \sigma$ is valued in $\Sigma_{\mathrm{left}}\times T^1A_\mathrm{right}$.\\
\end{proof}

\subsection{Invariant measures on \texorpdfstring{$\A^c$}{Ac}}\label{sec:Ac}

Propositions \ref{gdirac} and \ref{genericity} describe the topology of the space of invariant probability measures on $\A$. While the topology of the space of invariant probability measures on $\A^c$ is not well-known in general, for the Heintze-Gromov manifold Lemma \ref{lem:cross} allows us to give a complete and elementary description of this set.\\

Before we can state two consequences of Lemma \ref{lem:cross}, we need to introduce some notations. Since $A_{\mathrm{right}}$ and $A_{\mathrm{left}}$ are isometric to $S^1$, their unit tangent bundles contain exactly two connected components, both isometric to $S^1$. We will write:
$$T^1A_{\mathrm{right}} = T^1_+A_{\mathrm{right}} \sqcup T^1_-A_{\mathrm{right}} \qquad \text{and} \qquad T^1A_{\mathrm{left}} = T^1_+A_{\mathrm{left}} \sqcup T^1_-A_{\mathrm{left}}$$

The sub-bundles $T^1_+A_{\mathrm{right}}$, $T^1_-A_{\mathrm{right}}$, $T^1_+A_{\mathrm{left}}$ and $T^1_-A_{\mathrm{left}}$ all have exactly one invariant probability measure, which we will denote respectively by $\delta^+_{\mathrm{right}}$, $\delta^-_{\mathrm{right}}$, $\delta^+_{\mathrm{left}}$ and $\delta^-_{\mathrm{left}}$.

Recall that we identify the manifolds $\Sigma_{\mathrm{left}}$ and $\Sigma_{\mathrm{right}}$ with the zero sections of their tangent bundles. Following the logic of this identification, the Dirac measure on $\Sigma_{\mathrm{left}}$ supported on a single point $x\in \Sigma_{\mathrm{left}}$ (or $\Sigma_{\mathrm{right}}$) will be denoted by $\delta_x$. This can be seen as an extension of the notation we use for invariant Dirac measures on $T^1M$ since $x$ is identified with the zero vector with base point $x$, whose period is $0$ and whose orbit is just a singleton.

\begin{lem}\label{lem:Sc}The set of ergodic probability measures on $\A^c$ contains only Dirac measures, and it admits the following decomposition into four connected components:
$$\mathcal{M}_e^1(\A^c) = \mathcal{M}_{per}^1(\A^c) = \Big(\{\delta_x~;~x\in\Sigma_{\mathrm{left}}\}\otimes\{\delta^+_{\mathrm{right}},\delta^-_{\mathrm{right}}\} \Big)\bigsqcup \Big(\{\delta^+_{\mathrm{left}},\delta^-_{\mathrm{left}}\}\otimes\{\delta_x~;~x\in\Sigma_{\mathrm{right}}\} \Big)$$
Moreover, its closure is exactly $\mathcal{M}_{per}^1\big(\overline{\A^c}\big)$.
\end{lem}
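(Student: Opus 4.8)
The plan is to exploit the very rigid structure of the geodesic flow on $\A^c$ furnished by Lemma~\ref{lem:cross}: every vector of $\A^c$ is periodic with one common period $L=\mathrm{length}(A)$, and $\A^c$ is the disjoint union of the four pieces
$$\Sigma_{\mathrm{left}}\times T^1_+A_{\mathrm{right}},\qquad \Sigma_{\mathrm{left}}\times T^1_-A_{\mathrm{right}},\qquad T^1_+A_{\mathrm{left}}\times\Sigma_{\mathrm{right}},\qquad T^1_-A_{\mathrm{left}}\times\Sigma_{\mathrm{right}},$$
each of which is a union of periodic orbits (hence invariant) and on which, after the identification of $\Sigma_{\mathrm{left}}$ and $\Sigma_{\mathrm{right}}$ with their zero sections, the geodesic flow is the product of the identity on $\Sigma_{\mathrm{left}}$ or $\Sigma_{\mathrm{right}}$ with the uniquely ergodic unit-speed rotation on a circle $T^1_\pm A$.

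First I would settle the identity $\mathcal{M}^1_e(\A^c)=\mathcal{M}^1_{per}(\A^c)$ and the explicit description. One inclusion is immediate: Dirac measures on periodic orbits are ergodic, and by the description above the periodic orbits in $\A^c$ are exactly the fibres $\{x\}\times T^1_\pm A_{\mathrm{right}}$ for $x\in\Sigma_{\mathrm{left}}$ and $T^1_\pm A_{\mathrm{left}}\times\{x\}$ for $x\in\Sigma_{\mathrm{right}}$, whose invariant probability measures are precisely $\delta_x\otimes\delta^\pm_{\mathrm{right}}$ and $\delta^\pm_{\mathrm{left}}\otimes\delta_x$; this is the claimed set. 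Conversely, let $\mu\in\mathcal{M}^1_e(\A^c)$. Since the four pieces above are invariant Borel sets partitioning $\A^c$, ergodicity forces $\mu$ to give full mass to one of them, say $P=\Sigma_{\mathrm{left}}\times T^1_+A_{\mathrm{right}}$. The projection $p\colon P\to\Sigma_{\mathrm{left}}$ onto the zero-section factor satisfies $p\circ g^t=p$, so $p^{-1}(B)$ is invariant for every Borel $B\subseteq\Sigma_{\mathrm{left}}$; hence $p_\ast\mu$ takes only the values $0$ and $1$, and since $\Sigma_{\mathrm{left}}$ is a separable metric space $p_\ast\mu=\delta_{x_0}$ for some $x_0$. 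Thus $\mu$ is carried by the single periodic orbit $\{x_0\}\times T^1_+A_{\mathrm{right}}$, on which the flow is uniquely ergodic, so $\mu=\delta_{x_0}\otimes\delta^+_{\mathrm{right}}$.

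For the connected components, each of the four families $\{\delta_x\otimes\delta^+_{\mathrm{right}}:x\in\Sigma_{\mathrm{left}}\}$, $\{\delta_x\otimes\delta^-_{\mathrm{right}}:x\in\Sigma_{\mathrm{left}}\}$, $\{\delta^+_{\mathrm{left}}\otimes\delta_x:x\in\Sigma_{\mathrm{right}}\}$ and $\{\delta^-_{\mathrm{left}}\otimes\delta_x:x\in\Sigma_{\mathrm{right}}\}$ is the image of the connected manifold $\Sigma_{\mathrm{left}}$ or $\Sigma_{\mathrm{right}}$ under a continuous injective map, hence connected, and the four are pairwise disjoint. It then suffices to check each is closed in $\mathcal{M}^1_e(\A^c)$, for a partition of a space into finitely many disjoint closed sets makes each of them open, so they are exactly the connected components. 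If $\delta_{x_n}\otimes\delta^+_{\mathrm{right}}\to\mu$ with $\mu\in\mathcal{M}^1_e(\A^c)$, then by compactness of $\Sigma_{\mathrm{left}}\cup A_{\mathrm{left}}$ we may assume $x_n\to x$; were $x\in A_{\mathrm{left}}$, the limit $\delta_x\otimes\delta^+_{\mathrm{right}}$ would be supported on a closed geodesic lying inside the flat torus $\mathbb{T}$, whose vectors do not belong to $\A^c$ by Lemma~\ref{lem:cross}, contradicting $\mu(\A^c)=1$; hence $x\in\Sigma_{\mathrm{left}}$ and $\mu=\delta_x\otimes\delta^+_{\mathrm{right}}$ stays in the first family. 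The other three families are treated identically.

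Finally, for the closure statement, observe that $\overline{\A^c}=\big((\Sigma_{\mathrm{left}}\cup A_{\mathrm{left}})\times T^1A_{\mathrm{right}}\big)\cup\big(T^1A_{\mathrm{left}}\times(\Sigma_{\mathrm{right}}\cup A_{\mathrm{right}})\big)$, a compact set all of whose vectors are still periodic of period $L$; hence $v\mapsto\delta_v=\tfrac1L\int_0^L\delta_{g^sv}\,ds$ is continuous on $\overline{\A^c}$, so $\mathcal{M}^1_{per}(\overline{\A^c})$ is its compact (hence closed) image, equal to $\{\delta_x\otimes\delta^\pm_{\mathrm{right}}:x\in\Sigma_{\mathrm{left}}\cup A_{\mathrm{left}}\}\cup\{\delta^\pm_{\mathrm{left}}\otimes\delta_x:x\in\Sigma_{\mathrm{right}}\cup A_{\mathrm{right}}\}$. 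The inclusion $\overline{\mathcal{M}^1_e(\A^c)}\subseteq\mathcal{M}^1_{per}(\overline{\A^c})$ follows just as in the previous paragraph, now without invoking $\mu(\A^c)=1$: a convergent sequence in $\mathcal{M}^1_e(\A^c)$, reduced to one of the four families, converges to some $\delta_x\otimes\delta^\pm$ with $x$ in $\Sigma_{\mathrm{left}}\cup A_{\mathrm{left}}$ (or $\Sigma_{\mathrm{right}}\cup A_{\mathrm{right}}$), which lies in $\mathcal{M}^1_{per}(\overline{\A^c})$; conversely, a boundary measure $\delta_x\otimes\delta^+_{\mathrm{right}}$ with $x\in A_{\mathrm{left}}$ is the weak-$\ast$ limit of $\delta_{x_n}\otimes\delta^+_{\mathrm{right}}$ for any sequence $x_n\in\Sigma_{\mathrm{left}}$ with $x_n\to x$, and symmetrically for the other boundary measures, so $\mathcal{M}^1_{per}(\overline{\A^c})\subseteq\overline{\mathcal{M}^1_e(\A^c)}$ and, as the former set is closed, we get equality. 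The one point requiring genuine care throughout is the bookkeeping around the central torus $\mathbb{T}$: deciding precisely which limiting vectors with base point on $A_{\mathrm{left}}$ or $A_{\mathrm{right}}$ belong to $\A^c$, to $\A$, and to $\overline{\A^c}$, while keeping the two coordinate descriptions of $\mathbb{T}$ (inside $M_{\mathrm{left}}$ and inside $M_{\mathrm{right}}$) consistent; everything else is a routine disintegration and unique-ergodicity argument together with elementary point-set topology in the compact metrizable space $\mathcal{M}^1$.
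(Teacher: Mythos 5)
Your proposal is correct, and it follows the same overall skeleton as the paper (use Lemma~\ref{lem:cross} to identify $\A^c$ as a union of four invariant tubes of periodic orbits of common period, deduce that ergodic measures are Dirac, describe the four connected components, and close by passing to $\overline{\A^c}$), but the two micro-steps you use differ from the paper's. For the identity $\mathcal{M}^1_e(\A^c)=\mathcal{M}^1_{per}(\A^c)$, the paper invokes the Birkhoff Ergodic Theorem in one line: a generic vector for $\mu$ lies in $\A^c$, hence is periodic, hence its time averages are the integrals against the Dirac measure on its orbit, and $\mu$ equals that Dirac measure. You instead run a disintegration argument specific to the product structure: the projection $p$ onto the $\Sigma$ factor is flow-invariant, so $p_*\mu$ is a $\{0,1\}$-valued Borel probability measure on a separable metric space and therefore a point mass $\delta_{x_0}$, after which unique ergodicity of the circle rotation on the fibre over $x_0$ forces $\mu=\delta_{x_0}\otimes\delta^\pm$. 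Your route is a bit longer and uses the special product geometry, but it is perfectly sound and has the small virtue of isolating exactly which structural features are being used. For the closure statement, the paper extracts a convergent subsequence of base vectors via Prokhorov and uses the common period to pass to the limit; you instead note that $v\mapsto\delta_v=\tfrac1L\int_0^L\delta_{g^sv}\,ds$ is continuous on the compact set $\overline{\A^c}$, so $\mathcal{M}^1_{per}(\overline{\A^c})$ is a compact, hence closed, set, and then match it against $\overline{\mathcal{M}^1_e(\A^c)}$ by two elementary inclusions. This is cleaner than the paper's subsequence argument and gets to the same place. Your extra verification that the four families are each closed in $\mathcal{M}^1_e(\A^c)$ (so that they are genuinely the connected components, not just connected subsets) is a detail the paper leaves implicit and is worth having spelled out.
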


\begin{rem}Obviously, $\{\delta_x~;~x\in\Sigma_{\mathrm{left}}\}$ is homeomorphic to $\Sigma_{\mathrm{left}}$. Hence Lemma \ref{lem:Sc} shows that $\mathcal{M}_e^1(\A^c)$ has exactly four connected components, each homeomorphic to $\Sigma_{\mathrm{left}}$ (see Figure \ref{fig:top_closure}).\end{rem}

\begin{proof}According to Birkhoff Ergodic Theorem, every ergodic measure has a generic vector. Since every vector in $\A$ is periodic by Lemma \ref{lem:cross}, every ergodic measure on $\A$ is a Dirac measure. The decomposition of $\mathcal{M}_{per}^1(\A^c)$ into four connected components is a consequence of the following decomposition of $\A^c$ into four connected components:
$$\A^c = \big(\Sigma_{\mathrm{left}} \oplus T^1_+A_{\mathrm{right}}\big) \sqcup \big(\Sigma_{\mathrm{left}} \oplus T^1_-A_{\mathrm{right}}\big) \sqcup \big(T^1_+A_{\mathrm{left}} \oplus \Sigma_{\mathrm{right}}\big) \sqcup \big( T^1_-A_{\mathrm{left}} \oplus \Sigma_{\mathrm{right}}\big) $$

It remains to prove that the closure of $\mathcal{M}_{per}^1(\A^c)$ is equal to $\mathcal{M}_{per}^1\big(\overline{\A^c}\big)$. Let $(v_n)$ be a sequence of vectors in $\A^c$ and assume that the sequence $(\delta_{v_n})$ converges toward a probability measure $\mu$. By the Prokhorov theorem, there exists a subsequence $(v_{k_n})$ converging to a vector $v\in \overline{\A^c}$. Since all the vectors $v_{k_n}$ are periodic with the same period, $v$ is also periodic and the sequence $(\delta_{v_{k_n}})_{n\in\N}$ converges tovard $\delta_v$. Thus $\mu = \delta_v \in \mathcal{M}^1_{per}\big(\overline{\A^c}\big)$.\end{proof}

\begin{cor}\label{lem:finite_Ac} Any finite invariant measure $\mu$ on $\A^c$ can be decomposed as the sum
$$\mu = \alpha_{l,+} \otimes \delta^+_{\mathrm{right}} + \alpha_{l,-} \otimes \delta^-_{\mathrm{right}} + \delta^+_{\mathrm{left}} \otimes \alpha_{r,+} + \delta^-_{\mathrm{left}} \otimes \alpha_{r,-}$$
where $\alpha_{l,+}$ and $\alpha_{l,-}$ are finite measures on $\Sigma_{\mathrm{left}}$ and $\alpha_{r,+}$ and $\alpha_{r,-}$ are finite measures on $\Sigma_{\mathrm{right}}$. Moreover, this decomposition is unique.
\end{cor}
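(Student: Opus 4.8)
The plan is to leverage the decomposition of $\A^c$ into four connected components established in Lemma \ref{lem:Sc}, namely
$$\A^c = \big(\Sigma_{\mathrm{left}} \oplus T^1_+A_{\mathrm{right}}\big) \sqcup \big(\Sigma_{\mathrm{left}} \oplus T^1_-A_{\mathrm{right}}\big) \sqcup \big(T^1_+A_{\mathrm{left}} \oplus \Sigma_{\mathrm{right}}\big) \sqcup \big( T^1_-A_{\mathrm{left}} \oplus \Sigma_{\mathrm{right}}\big),$$
so that any finite invariant measure $\mu$ splits uniquely as a sum $\mu = \mu_{l,+} + \mu_{l,-} + \mu_{r,+} + \mu_{r,-}$ of four finite invariant measures, each supported on one of these components. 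Since the components are open and closed in $\A^c$ and invariant under the geodesic flow, this splitting exists and is unique, and it suffices to treat one piece, say $\mu_{l,+}$, supported on $\Sigma_{\mathrm{left}} \oplus T^1_+A_{\mathrm{right}}$.

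Next I would identify the dynamics on this component. On $\Sigma_{\mathrm{left}} \oplus T^1_+A_{\mathrm{right}}$, every vector has zero $\Sigma_{\mathrm{left}}$-component (it lies in the zero section) and fixed unit $A_{\mathrm{right}}$-component pointing in the positive direction; the geodesic flow fixes the $\Sigma_{\mathrm{left}}$ base point and translates along $A_{\mathrm{right}} \simeq S^1$ at unit speed. Hence the component is naturally homeomorphic to $\Sigma_{\mathrm{left}} \times S^1$ with the flow acting trivially on the first factor and by rotation on the second. A finite invariant measure for this flow is exactly a product-type measure: by disintegrating over $\Sigma_{\mathrm{left}}$ and using that the only invariant probability on $T^1_+A_{\mathrm{right}} \simeq S^1$ for the rotation flow is $\delta^+_{\mathrm{right}}$, one gets $\mu_{l,+} = \alpha_{l,+} \otimes \delta^+_{\mathrm{right}}$ for a unique finite measure $\alpha_{l,+}$ on $\Sigma_{\mathrm{left}}$ (namely the pushforward of $\mu_{l,+}$ under projection to $\Sigma_{\mathrm{left}}$). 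Running the same argument on the other three components and recombining yields the claimed decomposition
$$\mu = \alpha_{l,+} \otimes \delta^+_{\mathrm{right}} + \alpha_{l,-} \otimes \delta^-_{\mathrm{right}} + \delta^+_{\mathrm{left}} \otimes \alpha_{r,+} + \delta^-_{\mathrm{left}} \otimes \alpha_{r,-},$$
and uniqueness follows from uniqueness of the component splitting together with uniqueness of the projections $\alpha_{l,\pm} = (\pi_{\Sigma_{\mathrm{left}}})_*\mu_{l,\pm}$ and $\alpha_{r,\pm} = (\pi_{\Sigma_{\mathrm{right}}})_*\mu_{r,\pm}$.

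I do not expect a serious obstacle here; the main point requiring a little care is the rigorous verification that the flow on each component is conjugate to a trivial $\times$ rotation action and that the disintegration argument produces a genuine product measure rather than just a measurable family — but this is exactly the content of the Fubini-type statement that an invariant measure for an $\mathrm{id} \times (\text{rotation})$ action on $X \times S^1$ with uniquely ergodic $S^1$-factor must be $\nu \otimes (\text{Haar})$, which is standard. One should also note explicitly that finiteness of $\mu$ passes to each $\alpha$ since the $\delta^{\pm}$ factors are probability measures. With these remarks the proof is routine.
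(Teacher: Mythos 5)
Your proof is correct, but it takes a genuinely different route from the paper's. Both you and the paper begin by decomposing $\A^c$ into its four open-and-closed invariant components and reducing to a single piece, say $\Sigma_{\mathrm{left}} \oplus T^1_+A_{\mathrm{right}}$. From there you argue directly via the dynamics: you identify the flow on this component as $\mathrm{id}\times(\text{rotation})$ on $\Sigma_{\mathrm{left}}\times S^1$, disintegrate $\mu_{l,+}$ over the $\Sigma_{\mathrm{left}}$ factor, and invoke unique ergodicity of the circle rotation to force the conditional measures to be the normalized arc-length $\delta^+_{\mathrm{right}}$; this gives the product form and its uniqueness in one stroke. The paper instead reaches the same conclusion through the space of measures: it uses the Ergodic Decomposition Theorem to write $\mathcal{M}^1(\Sigma_{\mathrm{left}} \oplus T^1_+A_{\mathrm{right}})$ as the closed convex hull of the ergodic measures, then invokes Lemma \ref{lem:Sc} (every vector in $\A^c$ is periodic, so every ergodic measure is a Dirac $\delta_x\otimes\delta^+_{\mathrm{right}}$) to reduce to the closed convex hull of $\{\delta_x\}$, which it identifies with all probability measures on $\Sigma_{\mathrm{left}}$ by a theorem of Parthasarathy \cite{P67}. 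Your disintegration argument is more elementary and self-contained (no appeal to \cite{P67} or to ergodic decomposition), while the paper's approach has the advantage of directly reusing the already-established Lemma \ref{lem:Sc} and staying entirely at the level of the simplex $\mathcal{M}^1$, consistent with the overall viewpoint of the article. Both are complete; the one point you flag yourself — that the disintegration really produces a product measure — is indeed the standard Fubini-type fact for a product action with uniquely ergodic fiber, so no gap.
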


\begin{proof}Any finite invariant measure on $\A^c$ can be decomposed as a sum of four finite invariant measures, each supported on one of the four connected components of $\A^c$. Let us consider for example the set of finite invariant measures supported on $\Sigma_{\mathrm{left}} \oplus T^1_+A_{\mathrm{right}}$. Any finite invariant measure is equal to a probability measure up to a constant factor, so we might as well work with invariant probability measures. Once again, the Ergodic Decomposition Theorem allows us to simplify the problem:
$$\mathcal{M}^1(\Sigma_{\mathrm{left}} \oplus  T^1_+A_{\mathrm{right}}) =  \overline{\text{Conv}(\mathcal{M}^1_e(\Sigma_{\mathrm{left}} \oplus T^1_+A_{\mathrm{right}}))}$$

Moreover, Lemma \ref{lem:cross} shows that every vector in $\A^c$ is periodic, thus according to Birkhoff Ergodic Theorem every ergodic measure on $\Sigma_{\mathrm{left}} \oplus T^1_+A_{\mathrm{right}}$ is a Dirac measure. Hence:
$$\mathcal{M}^1(\Sigma_{\mathrm{left}} \oplus  T^1_+A_{\mathrm{right}}) 
=  \overline{\text{Conv}\big(\{\delta_x \otimes \delta^+_{\mathrm{right}}~;~x\in\Sigma_{\mathrm{left}}\} \big)}
=  \overline{\text{Conv}\big(\{\delta_x~;~x\in\Sigma_{\mathrm{left}}\}\big)} \otimes \{\delta^+_{\mathrm{right}}\}$$

Finally, the set $\overline{\text{Conv}\left(\{\delta_x~;~x\in\Sigma_{\mathrm{left}}\}\right)}$ is exactly the set of probability measures on $\Sigma_{\mathrm{left}}$: this is proved within the proof of \cite[Proposition 4.4]{P67}. Hence, any finite invariant measure supported on $\Sigma_{\mathrm{left}} \oplus  T^1_+A_{\mathrm{right}}$ is the product of a unique finite measure on $\Sigma_{\mathrm{left}}$ and $\delta^+_{\mathrm{right}}$.\\
\end{proof}

\subsection{Closure of the set of Dirac measures supported on regular orbits}\label{sec:closure_dirac}

It is now very simple to prove that on the Heintze-Gromov manifold, Proposition \ref{gdirac} holds as an equivalence. After proving Proposition \ref{prop:D}, we illustrate how it can be used to decompose the set of ergodic probability measures on the unit-tangent bundle of the Heintze-Gromov manifold, as well as its closure and the space of finite invariant measures, using only Dirac measures. Together with the results of Section \ref{sec:Ac}, this will show that the topology of the closure of the set of ergodic probability measures is not contractible (see Figure \ref{fig:top_closure}) illustrating how the behavior of the geodesic flow in non-positive curvature is different from the Anosov setting, where the closure of the set of ergodic measures is equal to the whole space of invariant probability measures, hence convex and contractible.

\begin{propD} An ergodic probability measure $\mu$ on $T^1 M$ can be approximated by Dirac measures supported on orbits that do not bound a flat strip if and only if $\mu(\A)=1$.\end{propD}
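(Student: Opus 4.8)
The plan is to prove the two implications separately. The ``if'' direction is immediate from Proposition \ref{gdirac}: if $\mu(\A)=1$ then $\mu\in\mathcal{M}_e^1(\A)\subset\overline{\mathcal{M}_{per}^1(\ONF)}$, so $\mu$ is approximated by Dirac measures supported on orbits that do not bound a flat strip. So the real content is the ``only if'' direction, and here the key is that on the Heintze--Gromov manifold we have the very rigid description of $\A^c$ from Lemma \ref{lem:cross}, namely $\A^c = (\Sigma_{\mathrm{left}}\times T^1A_{\mathrm{right}})\cup(T^1A_{\mathrm{left}}\times\Sigma_{\mathrm{right}})$, together with the fact (noted in the remark after Lemma \ref{lem:cross}) that every vector in $\A^c$ is periodic with one common period, say $p_0$.

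First I would assume $\mu$ is ergodic with $\mu(\A)<1$; since $\A$ is invariant and $\mu$ ergodic, this forces $\mu(\A^c)=1$. By Lemma \ref{lem:Sc}, the ergodic measures on $\A^c$ are exactly the Dirac measures listed there, so $\mu$ is one of them — say, up to symmetry, $\mu=\delta_x\otimes\delta^+_{\mathrm{right}}$ for some $x\in\Sigma_{\mathrm{left}}$, a periodic orbit lying in $\Sigma_{\mathrm{left}}\oplus T^1_+A_{\mathrm{right}}$, hence a geodesic of the form ``stay at the point $x$ in the $\Sigma_{\mathrm{left}}$ factor, rotate around $A_{\mathrm{right}}$''. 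The goal is to show such a $\mu$ is \emph{not} in the closure of $\mathcal{M}_{per}^1(\ONF)$. I would do this by exhibiting a single bounded Lipschitz test function $f$ that separates $\delta_x\otimes\delta^+_{\mathrm{right}}$ from every $\delta_{v_{NF}}$ with $v_{NF}\in\ONF$, uniformly. The natural choice exploits the geometry: the support of $\mu$ sits inside the region $\{\|\dot p\|=0\}\subset T^1M_{\mathrm{left}}$ (in the notation of the proof of Lemma \ref{lem:cross}), which by case b) of that proof is contained in a product neighborhood $B_\epsilon(p_0)\times A$ with no rank-one vectors. So I would take $f$ to be a function depending only on the $\Sigma$-component of the vector — essentially $f(v) = \varphi(\text{distance in }\Sigma\text{ from the }\Sigma\text{-footpoint of }v\text{ to the torus }\mathbb{T})$, cut off to be $\le 0$ near $\mathbb{T}$ and equal to $1$ on a neighborhood of $x$ — so that $\int f\,d\mu = 1$ while any orbit in $\ONF$, by condition \textit{iii)} of Lemma \ref{lem:cross}, must cross $\mathbb{T}$ and spend a definite fraction of time on the ``wrong'' side, forcing $\int f\,d\delta_{v_{NF}}$ to stay bounded away from $1$.

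The main obstacle is making that last ``definite fraction of time'' quantitative and uniform over \emph{all} periodic orbits in $\ONF$, including very long ones and ones that dip only briefly into $\mathring M_{\mathrm{right}}$. A cleaner route, which I would prefer, avoids quantifying time fractions: instead use that $\overline{\A^c}$ is closed and invariant, that by Lemma \ref{lem:Sc} the closure $\overline{\mathcal{M}_{per}^1(\A^c)}=\mathcal{M}_{per}^1(\overline{\A^c})$ consists only of Dirac measures supported on periodic orbits \emph{inside} $\overline{\A^c}$, and — the key point — that a Dirac measure $\delta_{v_{NF}}$ with $v_{NF}\in\ONF$ gives zero mass to the closed invariant set $\overline{\A^c}$ (since an orbit in $\ONF$ meets $\mathring M_{\mathrm{right}}$, an open set disjoint from $\overline{\A^c} = \overline{(\Sigma_{\mathrm{left}}\times T^1A_{\mathrm{right}})}\cup\overline{(T^1A_{\mathrm{left}}\times\Sigma_{\mathrm{right}})}$, so its orbit is not contained in $\overline{\A^c}$). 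Then I would argue: if $\delta_{v_{NF}^{(n)}}\to\mu$ with $\mu=\delta_x\otimes\delta^+_{\mathrm{right}}$ supported on the compact invariant set $K\subset\mathring{}(\text{the }\|\dot p\|=0\text{ region})$, then since $\mu$ gives full mass to a small \emph{open} set $U$ with $U\cap\ONF=\emptyset$ (namely $B_\epsilon(p_0)\times A$), the Portmanteau theorem gives $\liminf \delta_{v_{NF}^{(n)}}(U)\ge\mu(U)=1$, so for large $n$ the orbit of $v_{NF}^{(n)}$ spends positive time in $U$; but $U$ is a flat-strip region disjoint from $\ONF$, so no point of the orbit of $v_{NF}^{(n)}\in\ONF$ lies in $U$ — contradiction. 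This contradiction completes the ``only if'' direction, and I would spend most of the writeup pinning down that a suitable such open set $U$ exists (this is exactly case b) in the proof of Lemma \ref{lem:cross}) and that it is genuinely disjoint from $\ONF$ and from the orbit of any $\ONF$-vector.
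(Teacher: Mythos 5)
Your ``if'' direction is fine and is exactly what the paper does (invoke Proposition~B). The real content is the ``only if'' direction, and there your preferred route contains a genuine error.

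You claim that $U = T^1\bigl(B_\epsilon(p_0)\times A_{\mathrm{right}}\bigr)$ is an open set containing the orbit $\gamma=\{x\}\times T^1_+A_{\mathrm{right}}$ that is disjoint from $\ONF$. This is false. The set $U$ contains, at every footpoint, vectors pointing in every direction, in particular vectors with a nonzero $\Sigma_{\mathrm{left}}$-component whose forward orbit escapes $B_\epsilon(p_0)$, reaches the torus $\mathbb{T}$, and crosses into $\mathring M_{\mathrm{right}}$; by Lemma~\ref{lem:cross} such a vector is in $\ONF$. Indeed, no open neighborhood of $\gamma$ can be disjoint from $\ONF$: fixing any geodesic $p$ in $\Sigma_{\mathrm{left}}$ through $x$ that hits $A_{\mathrm{left}}$ transversally and setting $v_n$ to be the unit vector proportional to $\tfrac1n\dot p(0)+\omega$ gives $v_n\in\ONF$ with $v_n\to(0_x,\omega)\in\gamma$, so $\gamma\subset\overline{\ONF}$ and lies on the boundary, not in the interior, of $\ONF^{\,c}$. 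The Portmanteau inequality $\liminf\delta_{v_{NF}^{(n)}}(U)\ge\mu(U)$ therefore yields no contradiction, because $\delta_{v_{NF}^{(n)}}(U)$ need not be $0$; a long regular periodic orbit can perfectly well spend a large fraction of its period inside $U$. You may be implicitly conflating the (correct) observation from Step~2 of Lemma~\ref{lem:cross} that the \emph{$\epsilon$-stable set} of $\dot\sigma(0)$ is disjoint from $\ONF$ with a statement about an open tube around the orbit; the $\epsilon$-stable set is not open, and the two statements are genuinely different.

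Your ``Route~1'' is closer in spirit to what is actually needed: one must show that any regular periodic orbit weak-$*$ close to $\delta_\gamma$ is forced to spend a time fraction bounded away from zero far from $\gamma$, because its $\Sigma$-component moves at constant nonzero speed through a negatively curved disk and must eventually traverse all the way to $\mathbb{T}$ and back. But as you yourself note, you do not make that quantitative, and you explicitly discard this route in favor of the flawed one. The paper does not carry out this estimate here either: it disposes of the ``only if'' direction by observing that every orbit in $\A^c$ lies in the unit-tangent bundle of a product of an open surface with $S^1$ and then invoking the prior result \cite{M25}, which is precisely where that quantitative time-average argument lives. So the correct takeaway is: either cite \cite{M25} as the paper does, or supply the uniform lower bound on the time spent away from $\gamma$ that your Route~1 gestures at; the Portmanteau shortcut does not work.
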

\begin{proof}
We already know that $\mu(\A)=1$ implies that $\mu$ can be approximated by Dirac measures: that is the statement of Proposition \ref{gdirac}. Conversely, let us assume that $\mu(\A^c)>0$. Then $\mu$ is a Dirac measure supported on $\A^c$ by Lemma \ref{lem:Sc}. Since every orbit in $\A^c$ is contained in the unit-tangent bundle of $\Sigma_{\mathrm{left}} \times A_{\mathrm{right}}$ or $\Sigma_{\mathrm{right}} \times A_{\mathrm{left}}$, which are both isometric to the product of a two-dimensional open manifold with the circle $S^1$, \cite{M25} shows that $\mu$ cannot be approximated by Dirac measures supported on orbits of $\Omega_{NF}$.\end{proof}

Since $\A$ and $\A^c$ form an invariant partition of $T^1M$, the sets of ergodic probability measures on $\A$ and $\A^c$ form a partition of the set of ergodic probability measures on $T^1M$. Thus the following statement is a direct consequence of Lemma \ref{lem:Sc} and Proposition \ref{prop:D}.
\begin{cor}\label{cor:erg}
The set of ergodic probability measures on $T^1M$ can be decomposed as: 
$$\mathcal{M}^1_e =\overline{\mathcal{M}_{per}^1(\ONF )}^{\mathcal{M}_e^1} \sqcup\mathcal{M}_{per}^1(\A^c)$$
where $~^{\overline{~~~}\mathcal{M}^1_e}$ denotes the closure within ${\mathcal{M}^1_e}$, i.e. $\overline{\mathcal{M}_{per}^1(\ONF )}^{\mathcal{M}_e^1} = \overline{\mathcal{M}_{per}^1(\ONF )} \cap {\mathcal{M}_e^1}$.
\end{cor}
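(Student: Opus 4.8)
The plan is to derive Corollary \ref{cor:erg} as a bookkeeping consequence of the two results it cites, so the proof is essentially an exercise in set-theoretic manipulation once the right objects are in hand. First I would recall that $\A$ is invariant under the geodesic flow (stated in the remarks of Section \ref{sec:cdirac}), and hence so is its complement $\A^c$; consequently any ergodic probability measure $\mu$ on $T^1M$ must assign mass $1$ to exactly one of the two sets $\A$, $\A^c$, since both are invariant and ergodicity forces $\mu$ of each invariant set to be $0$ or $1$. This partitions $\mathcal{M}_e^1$ into the disjoint union $\mathcal{M}_e^1(\A) \sqcup \mathcal{M}_e^1(\A^c)$.

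Next I would identify each piece. For the $\A^c$ part, Lemma \ref{lem:Sc} gives $\mathcal{M}_e^1(\A^c) = \mathcal{M}_{per}^1(\A^c)$ outright, so that term is already in the form appearing in the statement. For the $\A$ part, I would argue that $\mathcal{M}_e^1(\A) = \overline{\mathcal{M}_{per}^1(\ONF)} \cap \mathcal{M}_e^1$. The inclusion $\subset$ is exactly Proposition \ref{gdirac} (every ergodic measure on $\A$ lies in $\overline{\mathcal{M}_{per}^1(\ONF)}$), together with the trivial observation that an ergodic measure on $\A$ is in particular an element of $\mathcal{M}_e^1$. For the reverse inclusion $\supset$, suppose $\mu \in \overline{\mathcal{M}_{per}^1(\ONF)} \cap \mathcal{M}_e^1$; then $\mu$ is ergodic and can be approximated by Dirac measures supported on orbits in $\ONF$, so by the contrapositive of the "only if" direction of Proposition \ref{prop:D} we cannot have $\mu(\A^c) > 0$, hence $\mu(\A) = 1$, i.e. $\mu \in \mathcal{M}_e^1(\A)$.

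Combining these two identifications with the partition from the first step yields
$$\mathcal{M}_e^1 = \mathcal{M}_e^1(\A) \sqcup \mathcal{M}_e^1(\A^c) = \left(\overline{\mathcal{M}_{per}^1(\ONF)} \cap \mathcal{M}_e^1\right) \sqcup \mathcal{M}_{per}^1(\A^c),$$
which is precisely the claimed decomposition with the notation $\overline{\,\cdot\,}^{\mathcal{M}_e^1}$ for the closure within $\mathcal{M}_e^1$. The disjointness is inherited from the disjointness of $\A$ and $\A^c$: a measure in $\mathcal{M}_{per}^1(\A^c)$ gives full mass to $\A^c$, while a measure in $\overline{\mathcal{M}_{per}^1(\ONF)} \cap \mathcal{M}_e^1$ gives full mass to $\A$ by what was just shown.

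I do not anticipate a genuine obstacle here, since every substantive ingredient is already proved: the only points requiring a little care are (i) being explicit that ergodicity plus invariance of $\A$ forces the $0$–$1$ dichotomy on $\mu(\A)$, so that the union is genuinely disjoint and exhausts $\mathcal{M}_e^1$, and (ii) correctly reading Proposition \ref{prop:D} in its contrapositive form to get the nontrivial inclusion $\overline{\mathcal{M}_{per}^1(\ONF)} \cap \mathcal{M}_e^1 \subset \mathcal{M}_e^1(\A)$. If anything is delicate it is making sure the closure in Proposition \ref{gdirac} is taken in the same ambient space ($\mathcal{M}^1$, hence intersecting with $\mathcal{M}_e^1$ is the right normalization), but this is purely a matter of matching notation rather than a mathematical difficulty.
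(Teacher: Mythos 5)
Your proof is correct and follows essentially the same route as the paper, which declares the corollary a direct consequence of Lemma~\ref{lem:Sc} and Proposition~\ref{prop:D} after observing that the invariant partition $T^1M = \A \sqcup \A^c$ induces a partition $\mathcal{M}_e^1 = \mathcal{M}_e^1(\A) \sqcup \mathcal{M}_e^1(\A^c)$. The only cosmetic difference is that you invoke Proposition~\ref{gdirac} for the inclusion $\mathcal{M}_e^1(\A)\subset\overline{\mathcal{M}_{per}^1(\ONF)}$ and the ``only if'' half of Proposition~\ref{prop:D} for the converse, whereas the paper simply reads off the identity $\mathcal{M}_e^1(\A) = \overline{\mathcal{M}_{per}^1(\ONF)}\cap\mathcal{M}_e^1$ from the full biconditional of Proposition~\ref{prop:D}.
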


We can decompose similarly the closure of the set of ergodic probability measures on $T^1M$. 

\begin{cor}\label{cor:closure_Me}
The closure of the set of ergodic probability measures on $T^1M$ can be decomposed as a disjoint union:
$$\overline{\mathcal{M}^1_e} = \overline{\mathcal{M}_{per}^1(\ONF )} \sqcup\mathcal{M}_{per}^1(\A^c)$$
\end{cor}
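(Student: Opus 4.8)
The plan is to combine Corollary \ref{cor:erg}, which decomposes $\mathcal{M}^1_e$ as the disjoint union $\overline{\mathcal{M}_{per}^1(\ONF)}^{\mathcal{M}_e^1} \sqcup \mathcal{M}_{per}^1(\A^c)$, with the description of the closure of $\mathcal{M}_{per}^1(\A^c)$ furnished by Lemma \ref{lem:Sc}. Taking the closure (in $\mathcal{M}^1$) of both sides of the decomposition in Corollary \ref{cor:erg}, one gets
$$\overline{\mathcal{M}^1_e} = \overline{\overline{\mathcal{M}_{per}^1(\ONF)}^{\mathcal{M}_e^1}} \cup \overline{\mathcal{M}_{per}^1(\A^c)}.$$
The first term on the right is contained in $\overline{\mathcal{M}_{per}^1(\ONF)}$ since $\overline{\mathcal{M}_{per}^1(\ONF)}^{\mathcal{M}_e^1} \subset \overline{\mathcal{M}_{per}^1(\ONF)}$ and the latter is already closed; conversely $\mathcal{M}_{per}^1(\ONF) \subset \mathcal{M}_e^1 \cap \overline{\mathcal{M}_{per}^1(\ONF)} = \overline{\mathcal{M}_{per}^1(\ONF)}^{\mathcal{M}_e^1}$, so the first term equals exactly $\overline{\mathcal{M}_{per}^1(\ONF)}$. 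The second term equals $\mathcal{M}_{per}^1(\overline{\A^c})$ by the last assertion of Lemma \ref{lem:Sc}. Hence
$$\overline{\mathcal{M}^1_e} = \overline{\mathcal{M}_{per}^1(\ONF)} \cup \mathcal{M}_{per}^1\big(\overline{\A^c}\big).$$

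The remaining work is to upgrade this union to a disjoint union and to absorb the extra boundary vectors $\overline{\A^c}\setminus\A^c$ correctly. First I would check that $\mathcal{M}_{per}^1(\overline{\A^c}) = \mathcal{M}_{per}^1(\A^c)$: the set $\overline{\A^c}\setminus\A^c$ consists of vectors in $T^1A_{\mathrm{left}}\times T^1A_{\mathrm{right}}$ with both components of nonzero norm (those on the torus $\mathbb{T}$), but such vectors lie in $\ONF$ by Lemma \ref{lem:cross} (they encounter — in fact generate geodesics tangent to — the torus, hence the argument of case c) applies and they are rank one when not tangent to $\mathbb{T}$; when tangent to $\mathbb{T}$ one uses that the relevant geodesics with nonzero components in both circle factors are not among the listed vectors of $\A^c$). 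More carefully: $\A^c = (\Sigma_{\mathrm{left}}\times T^1A_{\mathrm{right}})\cup(T^1A_{\mathrm{left}}\times\Sigma_{\mathrm{right}})$ is already closed in $T^1M$, since $\Sigma_{\mathrm{left}}$ and $\Sigma_{\mathrm{right}}$ are closed (being $\Sigma_{\mathrm{left}}\cup A_{\mathrm{left}}$-type closures of the open sides, identified with their zero sections, which are compact), so $\overline{\A^c} = \A^c$ and the substitution is immediate; this should be stated explicitly or referenced from Lemma \ref{lem:cross}. Then $\mathcal{M}_{per}^1(\overline{\A^c}) = \mathcal{M}_{per}^1(\A^c)$, and the displayed union becomes $\overline{\mathcal{M}_{per}^1(\ONF)} \cup \mathcal{M}_{per}^1(\A^c)$.

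It remains to see the union is disjoint. A measure $\nu$ in the intersection would be a Dirac measure $\delta_v$ with $v\in\A^c$ (since $\mathcal{M}_{per}^1(\A^c)$ consists of such), while $\nu \in \overline{\mathcal{M}_{per}^1(\ONF)}$. But Proposition \ref{prop:D} states that an ergodic probability measure is approximated by Dirac measures supported on orbits that do not bound a flat strip if and only if it gives full mass to $\A$; since $\delta_v$ with $v\in\A^c$ satisfies $\delta_v(\A^c)=1>0$, i.e.\ $\delta_v(\A)=0\ne 1$, it cannot lie in $\overline{\mathcal{M}_{per}^1(\ONF)}$. (Equivalently, this is already recorded in Corollary \ref{cor:erg} as the disjointness of the two pieces of $\mathcal{M}^1_e$, and $\mathcal{M}_{per}^1(\A^c)$ is the second piece while $\overline{\mathcal{M}_{per}^1(\ONF)}\cap\mathcal{M}^1_e$ is the first.) Therefore $\overline{\mathcal{M}^1_e} = \overline{\mathcal{M}_{per}^1(\ONF)} \sqcup \mathcal{M}_{per}^1(\A^c)$, as claimed.

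I expect the only genuinely delicate point to be the bookkeeping around $\overline{\A^c}$ versus $\A^c$ — i.e.\ confirming that no new measures appear when one closes up $\mathcal{M}_{per}^1(\A^c)$ inside $\mathcal{M}^1$ rather than inside $\mathcal{M}^1_e$ — which reduces to the compactness/closedness of $\A^c$ observed above together with the fact, from the proof of Lemma \ref{lem:Sc}, that a limit of Dirac measures on periodic orbits of a fixed common period is again such a Dirac measure. Everything else is a formal manipulation of closures combined with the already-established Corollary \ref{cor:erg}, Lemma \ref{lem:Sc}, and Proposition \ref{prop:D}.
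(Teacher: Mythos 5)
There is a genuine gap in the middle of your argument: the claim that $\A^c$ is closed in $T^1M$ is false, because in the paper's notation $\Sigma_{\mathrm{left}}$ and $\Sigma_{\mathrm{right}}$ denote the \emph{open} connected components of $\Sigma\setminus A$ (the paper writes $\Sigma_{\mathrm{left}}\cup A_{\mathrm{left}}$ whenever it wants the closure). Consequently
$$\A^c = (\Sigma_{\mathrm{left}}\times T^1A_\mathrm{right})\cup (T^1A_\mathrm{left}\times \Sigma_{\mathrm{right}})$$
is \emph{not} closed: taking a sequence of base points $(x_n, a)\in\Sigma_{\mathrm{left}}\times A_{\mathrm{right}}$ with $x_n\to A_{\mathrm{left}}$ and the unit vector in the $A_{\mathrm{right}}$-direction, one obtains in the limit a unit vector tangent to $\mathbb{T}$ in a pure circle direction, with base point on the torus, which belongs neither to $\Sigma_{\mathrm{left}}\times T^1A_{\mathrm{right}}$ nor to $T^1A_{\mathrm{left}}\times\Sigma_{\mathrm{right}}$. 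So $\overline{\A^c}\setminus\A^c$ is nonempty, and your substitution $\mathcal{M}_{per}^1(\overline{\A^c}) = \mathcal{M}_{per}^1(\A^c)$ is not justified by closedness. (Your earlier parenthetical attempt is also incorrect: the vectors of $\overline{\A^c}\setminus\A^c$ are tangent to the flat torus $\mathbb{T}$, hence bound flat strips and lie \emph{outside} $\ONF$, not inside it; and $\overline{\A^c}\setminus\A^c$ is not the set of torus vectors with both circle components nonzero.)

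The correct way to pass from $\mathcal{M}_{per}^1(\overline{\A^c})$ to $\mathcal{M}_{per}^1(\A^c)$ in the union is not that the two sets coincide, but that the extra Dirac measures are already accounted for by the other term. Indeed $\overline{\A^c}\setminus\A^c\subset\A$ by definition (being disjoint from $\A^c$), so by Proposition \ref{gdirac} every Dirac measure supported on $\overline{\A^c}\setminus\A^c$ lies in $\overline{\mathcal{M}_{per}^1(\ONF)}$. This is exactly what the paper does, and it lets you replace $\mathcal{M}_{per}^1(\overline{\A^c})$ by $\mathcal{M}_{per}^1(\A^c)$ in the union without changing it. Your reduction to Corollary \ref{cor:erg} and Lemma \ref{lem:Sc}, and your disjointness argument via Proposition \ref{prop:D}, are both correct; only the absorption of $\overline{\A^c}\setminus\A^c$ needs to be repaired in the way described.
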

\begin{proof}
By Corollary \ref{cor:erg}, the closure of the set of ergodic probability measures $\overline{\mathcal{M}^1_e}$ can be written as the union of $\overline{\mathcal{M}_{per}^1(\ONF )}$ and $\overline{\mathcal{M}_{per}^1(\A^c)}$. The latter is equal to $\mathcal{M}_{per}^1\big(\overline{\A^c}\big)$ by Lemma \ref{lem:Sc}. Moreover, by Proposition \ref{gdirac} every Dirac measure supported on $\overline{\A^c}\setminus \A^c$ can be approximated by Dirac measures supported on $\ONF$, hence $\overline{\mathcal{M}^1_e}$ is equal to the union of $\overline{\mathcal{M}_{per}^1(\ONF )}$ and $\mathcal{M}_{per}^1(\A^c)$, which is disjoint according to Proposition \ref{prop:D}.
\end{proof}

According to Lemma \ref{lem:Sc}, the set of ergodic probability measures on $\A^c$ has four connected components homeomorphic to $\Sigma_{\mathrm{left}}$. Moreover, its boundary is exactly $\mathcal{M}_{per}^1\big(\overline{\A^c}\setminus \A^c\big)$, which has also four connected components homeomorphic to $S^1$ and contained in $\mathcal{M}_{per}^1(\Omega_{NF})$. As a consequence, $\overline{\mathcal{M}_e^1}$ is homeomorphic to an adjunction space containing $\overline{\mathcal{M}_{per}^1(\Omega_{NF})}$ and four copies of $\Sigma_{\mathrm{left}}$. Notice that $\overline{\mathcal{M}_{per}^1(\Omega_{NF})}$ contains a contractible set, $\mathcal{M}^1(\A)$, but the complete topology of $\overline{\mathcal{M}_{per}^1(\Omega_{NF})}$ is not known.

\begin{figure}[H]
\centering
\includegraphics[angle=0, scale=0.22]{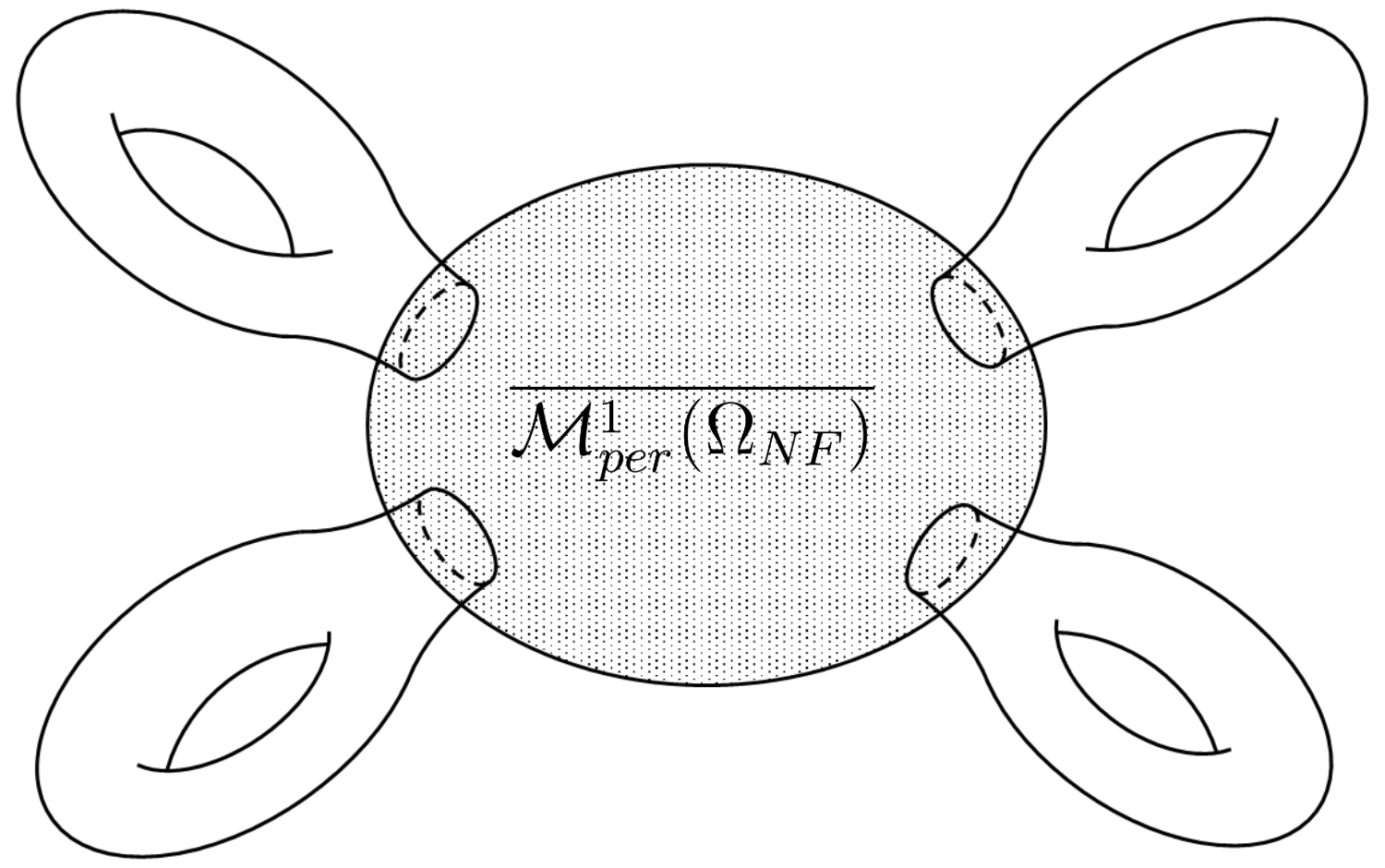}
\caption{Topology of $\overline{\mathcal{M}_e^1}$.} 
\label{fig:top_closure}
\end{figure}

With the help of Proposition \ref{genericity} we can decompose similarly the set of finite invariant measures on the unit-tangent bundle of $M$.

\begin{prop}
Any invariant probability measure $\mu\in\mathcal{M}^1$ admits a unique decomposition as:
$$\mu = \alpha \mu_{NF} + (1-\alpha)\nu$$
with $\mu_{NF}\in\overline{\mathcal{M}_{per}^1(\ONF )}$, $\nu\in\mathcal{M}^1(\A^c)$ and $\alpha =\mu(\A)$.
\end{prop}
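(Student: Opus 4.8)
The plan is to split $\mu$ according to the invariant partition $T^1M = \A \sqcup \A^c$ and identify each piece. Since both $\A$ and $\A^c$ are $g^t$-invariant and Borel, the restrictions $\mu|_{\A}$ and $\mu|_{\A^c}$ are finite invariant measures, and $\mu = \mu|_{\A} + \mu|_{\A^c}$. Set $\alpha = \mu(\A)$. If $\alpha = 0$ or $\alpha = 1$ the statement is immediate (with the convention that an undetermined factor is chosen arbitrarily, or rather: one of the two summands vanishes and the other is forced), so assume $0 < \alpha < 1$. Then $\mu_{NF} := \tfrac1\alpha \mu|_{\A}$ is an invariant \emph{probability} measure with $\mu_{NF}(\A) = 1$, and $\nu := \tfrac1{1-\alpha}\mu|_{\A^c}$ is an invariant probability measure with $\nu(\A^c) = 1$, giving the decomposition $\mu = \alpha\mu_{NF} + (1-\alpha)\nu$ with $\nu \in \mathcal{M}^1(\A^c)$ as required.

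The one substantive point is that $\mu_{NF} \in \overline{\mathcal{M}_{per}^1(\ONF )}$. This is exactly where Proposition \ref{genericity} enters: since $\mu_{NF} \in \mathcal{M}^1(\A)$, that proposition gives $\mu_{NF} \in \mathcal{M}^1(\A) \subset \overline{\mathcal{M}_{per}^1(\ONF )}$. No further work is needed here. For uniqueness, suppose $\mu = \alpha'\mu_{NF}' + (1-\alpha')\nu'$ is another such decomposition with $\mu_{NF}' \in \overline{\mathcal{M}_{per}^1(\ONF )}$ and $\nu' \in \mathcal{M}^1(\A^c)$. The key observation is that every measure in $\overline{\mathcal{M}_{per}^1(\ONF )}$ gives full mass to $\A$: indeed each $\delta_u$ with $u \in \ONF \subset \A$ satisfies $\delta_u(\A) = 1$, the set $\{\rho \in \mathcal{M}^1 : \rho(\A) = 1\}$ is closed for the weak-$*$ topology because $\A$ is open (it is the intersection over $\epsilon > 0$ of the open sets $W^{\epsilon,s}(\ONF)\cap W^{\epsilon,u}(\ONF)$, so a $G_\delta$; more carefully, $\{\rho : \rho(\A) \ge c\}$ is closed for open $\A$ by the portmanteau theorem applied with $c$ approached from below), hence it contains the closure of $\mathcal{M}_{per}^1(\ONF)$. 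Thus $\mu_{NF}'(\A) = 1$ and $\nu'(\A) = 0$, so evaluating $\mu = \alpha'\mu_{NF}' + (1-\alpha')\nu'$ on $\A$ yields $\mu(\A) = \alpha'$, forcing $\alpha' = \alpha$; then $\alpha\mu_{NF}' = \mu|_{\A} = \alpha\mu_{NF}$ and $(1-\alpha)\nu' = \mu|_{\A^c} = (1-\alpha)\nu$, giving $\mu_{NF}' = \mu_{NF}$ and $\nu' = \nu$.

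The main obstacle — really the only one requiring care — is the topological claim that $\overline{\mathcal{M}_{per}^1(\ONF )}$ sits inside $\{\rho : \rho(\A)=1\}$, since this underpins uniqueness. Here I would lean on the standing hypothesis that $\ONF$ is open (so that $\A$, being an intersection of open sets each of the form $W^{\epsilon,s}(\ONF)\cap W^{\epsilon,u}(\ONF)$, enjoys enough regularity); for the Heintze–Gromov manifold one may alternatively invoke Lemma \ref{lem:cross}, which identifies $\A^c$ explicitly as a closed set of periodic vectors, so that $\A$ is open and the portmanteau argument applies verbatim. Everything else is bookkeeping with the ergodic/Borel decomposition and normalization constants.
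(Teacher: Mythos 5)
Your existence argument is correct and is essentially the paper's: split $\mu$ along the invariant partition $T^1M = \A\sqcup\A^c$, normalize, and invoke Proposition~\ref{genericity} to place the $\A$-part in $\overline{\mathcal{M}_{per}^1(\ONF)}$. The uniqueness argument, however, has a genuine gap.

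You attempt to prove uniqueness \emph{without} using the hypothesis $\alpha = \mu(\A)$, deducing it instead from the claim that every $\rho\in\overline{\mathcal{M}_{per}^1(\ONF)}$ satisfies $\rho(\A)=1$. That claim is false, and the paper itself says so in the remark immediately following the proposition: ``If we do not impose $\alpha = \mu(\A)$, then the unicity of such decomposition does not hold anymore since there is no reason for $\alpha\mu_{NF}$ to be supported on $\A$. Indeed, there exist invariant probability measures on $\A^c$ that can be approximated by regular Dirac measures.'' So the stronger uniqueness you are trying to establish does not hold, and the step that powers it must be wrong. It is: your portmanteau reasoning runs backwards. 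Lower semicontinuity of $\rho\mapsto\rho(U)$ for $U$ open gives that $\{\rho : \rho(U) > c\}$ is \emph{open}, hence $\{\rho : \rho(U)\le c\}$ is closed, not $\{\rho : \rho(U)\ge c\}$. To conclude that $\{\rho : \rho(\A)=1\}$ is weak-$*$ closed you would need $\A$ to be \emph{closed}, not open. And neither holds: $\A^c = (\Sigma_{\mathrm{left}}\times T^1A_\mathrm{right})\cup (T^1A_\mathrm{left}\times \Sigma_{\mathrm{right}})$ is not closed (its closure contains $A_{\mathrm{left}}\times T^1A_{\mathrm{right}}$, etc.; Lemma~\ref{lem:Sc} and the proof of Corollary~\ref{cor:closure_Me} both use $\overline{\A^c}\ne\A^c$), and it is clearly not open either, being a positive-codimension subset of $T^1M$. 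So $\A$ is neither open nor closed.

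The correct uniqueness argument is shorter and is what the paper does, tersely. The constraint $\alpha=\mu(\A)$ is part of the statement, so it does not need to be derived. Given any decomposition $\mu=\alpha\mu_{NF}'+(1-\alpha)\nu'$ with $\nu'\in\mathcal{M}^1(\A^c)$ and this fixed $\alpha$, evaluate both sides on $\A$: since $\nu'(\A)=0$, one gets $\alpha=\mu(\A)=\alpha\,\mu_{NF}'(\A)$. If $\alpha>0$ this forces $\mu_{NF}'(\A)=1$, i.e.\ $\alpha\mu_{NF}'$ is supported on $\A$ and $(1-\alpha)\nu'$ on $\A^c$, so restricting $\mu$ to $\A$ and to $\A^c$ determines both summands (if $\alpha\in\{0,1\}$ one summand vanishes and the other is $\mu$, so there is nothing to prove). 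No topological property of $\A$ is needed, and no claim about $\overline{\mathcal{M}_{per}^1(\ONF)}$ beyond what Proposition~\ref{genericity} gives for the existence half.
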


\begin{rem}The measure $\nu$ can also be decomposed using only Dirac measures with Lemma \ref{lem:finite_Ac}.
\end{rem}

\begin{proof}
The space of finite invariant measures on $T^1M$ is the direct sum of the spaces of finite invariant measures on the invariant subspaces $\A$ and $\A^c$. Moreover, any invariant probability measure on $\A$ can be approximated by Dirac measures supported on $\ONF$ according to Proposition \ref{genericity}. This decomposition is unique because $\alpha\mu_{NF}$ is supported on $\A$ and $(1 - \alpha)\nu$ is supported on $\A^c$.
\end{proof}

\begin{rem}
If we do not impose $\alpha = \mu(\A)$, then the unicity of such decomposition does not hold anymore since there is no reason for $\alpha\mu_{NF}$ to be supported on $\A$. Indeed, there exist invariant probability measures on $\A^c$ that can be approximated by regular Dirac measures.
\end{rem}

\vspace{25pt}

\bibliographystyle{plain} 
\bibliography{sample.bib} 

\end{document}